\newtheorem{thm}{Theorem}
\newtheorem{corollary}[thm]{Corollary}
\newtheorem{lemma}[thm]{Lemma}
\newtheorem{proposition}[thm]{Proposition}
\newtheorem{definition}[thm]{Definition}
\newtheorem{remark}{Remark}
\newtheorem{example}[thm]{Example}
\numberwithin{equation}{section}
\author{N. Kadianakis \footnote{Department of Mathematics, National Technical University of Greece, Zographou Campus, 157 80 Athens Greece,  E-mail: nkad@math.ntua.gr} and F. I. Travlopanos \footnote{Sofianopoulou 11-13, K. Patissia 111 45, Athens, Greece, E-mail: ftravlo@gmail.com}}
\title{Variation of the Affine Connection in the kinematics of hypersurfaces}
\begin{document}
\maketitle
\begin{abstract}
 Affine deformations serve as basic examples in the continuum mechanics of deformable 3-dimensional bodies (referred as homogeneous deformations). They preserve parallelism and are often used as an approximation to general deformations. However, when the deformable body is a membrane, a shell or an interface modeled by a surface, the parallelism is defined by the affine connection of this surface. In this work we study the infinitesimally affine time - dependent deformations (motions) after establishing formulas for the variation of the connection, but in the more general context of hypersurfaces of a Riemannian manifold. We prove certain equivalent formulas expressing the variation of the connection in terms of geometrical quantities related to the variation of the metric, as expected, or in terms of mechanical quantities related to the kinematics of the moving continuum. The latter is achieved using an adapted version of polar decomposition theorem, frequently used in continuum mechanics to analyze the motion. Also, we apply our results to special motions like tangential and normal motions. Further, we find necessary and sufficient conditions for this variation to be zero (infinitesimal affine motions), giving insight on the form of the motions and the kind of hypersurfaces that allow such motions. Finally, we give some specific examples of mechanical interest which demonstrate motions that are infinitesimally affine but not infinitesimally isometric.
\end{abstract}
{\small\textbf{Key words}: Hypersurfaces, Affine Connection, Kinematics, Deformation, Variation}\\
\textit{MSC Subject Classifications:} Primary 53A07, 53A17; Secondary 74A05.

\section{Introduction}
Affine or homogeneous motions of a 3-dimensional body moving in the 3-dimensional Euclidean space are important examples in continuum mechanics because they are simple enough, preserve parallel directions and can be used to approximate general motions. However, when the body is a membrane, a shell or an interface, modeled by a surface, and therefore having a non Euclidean structure, the parallelism is defined by the affine connection of the surface. Hence, motions preserving the affine connection and therefore parallelism, become important. In this work we study infinitesimally affine motions from a more general perspective, by considering an m-dimensional continuum modeled by a hypersurface, moving in an (m+1)-dimensional ambient space, modeled by a Riemannian manifold. Although it is expected that the variation of the affine connection must be related to the variation of the metric of the hypersurface, in this paper we prove exact formulas for this relation. We give formulas for the variation of the connection either in terms of quantities from continuum mechanics or in terms of geometrical quantities related to the variation of the metric. These formulas are then used to state conditions for a motion in order to have zero variation of the connection (infinitesimal affine motion). The variation of an affine connection has been studied before, either for specific ambient spaces, or specific motions, or from a purely geometric viewpoint (\cite{Guven1}, \cite{Guven2}, \cite{Yano2}). In this work we use the most general motion as well as a general ambient space, and our formulas are given in a frame - independent way and are related to  geometry or to continuum mechanics. Further, our results comply with to those in the above works for suitable ambient spaces or motions.

In section 2 we review the main facts for the geometry of a hypersurface. In section 3 we present some basic concepts of the continuum mechanics related to a version of the polar decomposition theorem for moving hypersurfaces. In section 4 we define the notion of the variation of the connection. In section 5 we derive two types of formulae for the variation of the connection. The first type expresses the variation using the kinematical quantity of stretching which is essentially the variation of the metric and it is related to the symmetric part of the velocity gradient. The second type uses the second fundamental form of the hypersurface. It is shown that a hypersurface can undergo an infinitesimally affine motion if and only if the stretching of the motion, equivalently the variation of the metric, is covariantly constant (parallel). This fact imposes a restriction for the hypersurface in order to admit a parallel symmetric tensor field (\cite{Eisenhart}, \cite{Vilms}). We also study the case of tangential motion in which the velocity of the motion is tangent to the hypersurface and the normal motion in which the velocity is along the normal to the hypersurface. We prove that in the particular case of normal motion in which the hypersurface is moving parallel to itself, the variation of the connection is explicitly given by the covariant derivative of the second fundamental form. This fact imposes a strong restriction on the kind of hypersurface admitting an infinitesimally affine and parallel to itself motion. In section 6 we use the well known case of parallel hypersurfaces in a Euclidean space, in order to give an example of explicit calculation of the variation of the connection. Also we give two examples of mechanical interest in which although the metric varies, the motion is infinitesimally affine: The first example is a model for spherical balloon expanding by blowing in air (homothetically expanding sphere) and another one of unrolling and stretching a piece of cylindrical shell. 

This work contributes to the general study of the variation of the intrinsic and extrinsic geometry of a hypersurface, (\cite{NKadianakis}, \cite{NKad_Ftravlo}) and it is an attempt towards a rational and coordinate-free description of the kinematics of continua (see also \cite{Gurtin}, \cite{Murdoch1989}, \cite{Noll} and Truesdell \cite{Truesdell}). For this purpose we use an adapted version of the polar decomposition theorem introduced in \cite{COHEN}.  
The deformation of hypersurfaces has many applications in continuum mechanics, as in the description of shells and membranes, material surfaces, interfaces ( \cite{Szwabo}, \cite{Guven1}), thin rods constrained on surfaces \cite{Heijden} and wavefronts, as well as in the theory of relativity \cite{Guven2}. Additionally, the use of differentiable manifolds and Riemann spaces in continuum mechanics (\cite{Marsden}, \cite{Epstein}, \cite{Betounes1}) gives a better understanding of the concepts of continuum mechanics as well as allows the description of more complex situations (\cite{Yavari1}, \cite{Yavari2}). Finally, the subject has also been studied in Differential Geometry (\cite{Yano2}, \cite{BenAndrews}, \cite{DoCarmo2}.

\section{The geometry of a hypersurface}
e consider an ($m+1$) - dimensional Riemannian manifold $N$ and an $m$ - dimensional, oriented, differentiable hypesurface $M$ of $N$ and its canonical embedding
$j:M\hookrightarrow{N},$ writing $j(M)=\widetilde{M}\subset N.$ We denote by $\bar{g}$, $\overline{\nabla}$, $\bar{R}$ the metric tensor, the associated Levi Civita connection and  the Riemann curvature respectively of the ambient manifold $N$, while $g$, $\nabla$ and $R$ are denoting the corresponding geometrical quantities on the hypersurface $M$.
For each $X\in M$ let $J_{X}=dj_{X}:T_{X}%
{M}\rightarrow T_{j(X)}N$ be the differential of $j$ at $X.$ The sets of vector fields on
$M$ and $N$ are denoted by ${{{{\mathcal{X}}}}}(M)$ and ${{{\mathcal{X}}}}(N)$  respectively, and ${{{\mathcal{\bar{X}}}}}(M)$ is the set of vector fields defined on $M$ with values on the tangent bundle of the ambient manifold $N$ (also called vector fields along $M$). 
If $u\in{{{{\mathcal{X}}}%
}}(M)$, then $\bar{u}=Ju\in{{{\mathcal{\bar{X}}}}}(M),$ while if $\bar{u}\in{{{\mathcal{X}}}}(N)$ 
we may define its restriction to $M$ by
$w=\bar{u}\circ j\in{{{\mathcal{\bar{X}}}}}(M).$ Similarly, for a vector
field $w\in{{{\mathcal{\bar{X}}}}}(M)$ one can define an extension of it
$\bar{w}\in{{{\mathcal{X}}}}(N)$ as a vector field $\bar{w}$ such
that $\bar{w}\circ j=w.$ Extensions are not unique but their existence is
guaranteed. In what follows we assume that the fields along $M$ are
already extended to fields on $N$. For the Lie bracket we have (\cite{Yano}, p. 88) that if $u,v\in{{{{\mathcal{X}}}}}(M)$, and $Ju,Jv$ are extended
to fields on $N$, then
\begin{equation}
J[u,v]=[Ju,Jv]. \label{JLie}%
\end{equation}
A unit normal vector field $n\in\mathcal{\bar{X}}(M)$\ of the oriented
hypersurface satisfies:
\begin{equation}
\bar{g}(n,n)=1,\ \ \ \ \ \ \bar{g}(Ju,n)=0,\ \ u\in{{{\mathcal{X}}}}(M).
\label{n}%
\end{equation}
The induced metric tensor field $g$ on $M$ (\textit{first fundamental form})
is given by: 
\begin{equation}
g(u,v)=\bar{g}(Ju,Jv),\ \ \ \forall u,v\in{{{\mathcal{X}}}}(M). \label{1stff}%
\end{equation}
For each $X\in M$ we have the decomposition $T_{j(X)}N=J_{X}(T_{X}{M})\oplus N_{X},$
where $N_{X}=span\{n_{X}\}$ is the one dimensional subspace of $T_{j(X)}N$
generated by the unit normal $n_{X}$ at $X$. For any $W\in T_{j(X)}N$ the
normal projection (projection along$\ n)$ is the map
\begin{equation}
\pi_{X}:T_{j(X)}N\longrightarrow T_{j(X)}N,\ \ \ \pi_{X}(W)=W-\bar{g}(W,n)n.
\label{pi}%
\end{equation}
Since $\pi_{X}(W)\in T_{j(X)}\widetilde{M},$ it is the image under $J_{X}$ of
a vector $w\in T_{X}M$, that is, $\pi_{X}(W)=J_{X}w$. Thus we can define the projection: ${\mathcal P}_{X}:T_{j(X)}N\longrightarrow T_{X}M$, ${\mathcal P}_{X}W=w.$ We call $w$ the projection of $W$ to $T_{X}M$. 
Then it can be shown that: 
\begin{align}
J_{X}{\mathcal P}_{X}  &  =\pi_{X}:T_{j(X)}N\rightarrow T_{j(X)}N,\label{JP}\\
{\mathcal P}_{X}J_{X}  &  =I_{X}:T_{X}M\rightarrow T_{X}M,\label{PJ}\\
{\mathcal P}_{X}n_{X}  &  =0. \label{Pn}%
\end{align}
The induced metric $g$ on $M$ gives rise to the associated Levi-Civita
connection $\nabla$ on $M$ such that $\nabla g=0$ and which is defined by (omitting the point $X$), 
\begin{align*}
\nabla_{u}w= {\mathcal P}\overline{\nabla}_{Ju}Jw, \, \, \, \pi \overline{\nabla}_{Ju}Jw &= \overline{\nabla}_{Ju}Jw - \bar{g} \left( \overline{\nabla}_{Ju}Jw, n \right)n ~\text{~}\, \forall\;  u,w\in{{{{\mathcal{X}}}}}(M).
\end{align*}
The\textit{\ shape operator}
or the \textit{Weingarten map} is a symmetric linear map defined at each $X$
$\in M$ by
\begin{equation}
S_{X}:T_{X}M\rightarrow T_{X}M,\ \ \ \ \ \ S_{X}u=-{\mathcal P} \overline{\nabla}_{J_{X}u}n.
\label{ShapeOperator}%
\end{equation}
The \textit{second fundamental form} and the {\it third fundamental form} are the symmetric $(0,2)$ tensor fields on
$M$ defined respectively by:
\begin{align}
B(u,w)&=g(Su,w)= \bar{g}(n,\overline{\nabla}_{Jw}Ju). \label{2ndff} \\
III(u,w)&=g(Su,Sw)=B(Su,w). \label{3rdff}
\end{align}
The \textit{Gauss equation} relates the connections $\nabla$ of the hypersurface and
$\overline{\nabla}$ of the ambient space:
\begin{equation}
\overline{\nabla}_{Ju}Jw=J\nabla_{u}w+B(u,w)n=J\nabla_{u}w+g(Su,w)n
\label{GaussEquation}.%
\end{equation}
The Riemannian curvature for the connection $\nabla$ is defined, for any pair of
vector fields $u,v\in{{{{\mathcal{X}}}}}(M)$, as the linear map $R(u,v):{{{{\mathcal{X}%
}}}}(M)\rightarrow{{{{\mathcal{X}}}}}(M)$ such that:%
\begin{equation}
R(u,v)w=\nabla_{u}\nabla_{v}w-\nabla_{v}\nabla_{u}w-\nabla_{\lbrack u,v]}w.
\label{InducedCurvature}%
\end{equation}
Similarly $\overline{R}$ denotes the Riemann curvature tensor of the affine connection $\overline{\nabla}$ of the ambient space and it is related to the curvature tensor of the hypersurface by means of the projection, i.e.
\begin{align}\label{Gauss_equation_hypersurface}
{\mathcal P} \bar{R}(Ju,Jv)Jw  &  =\ R(u,v)w+g(Su,w)Sv-g(Sv,w)Su, \\
\bar{R}(Ju,Jv)n &= J (\nabla_{u}Sv - \nabla_{v}Su). \label{Gauss_equation_hypersurface_n}
\end{align}
Using the metric $g$ we can associate a vector field $u\in{{\mathcal{X}}}(M)$ with a 1-form $u^{\flat}$ on $M$, a 1-form $\xi$ with a vector field
$\xi^{\sharp}\in{{\mathcal{X}}}(M),$ such that for each vector field $v\in{{{\mathcal{X}}}}(M)$:
\begin{align}
u^{\flat}(v)  &  =g(u,v),  g(\xi^{\sharp},v) =\xi(v).\label{yfesidiesivectors}
\end{align}
The differential $df$ of a smooth real function $f$ has as associated vector field its gradient i.e. $\nabla f=(df)^{\sharp}$. These operations are the
usual operations of raising and lowering of indices. Extending them to tensor fields, for any linear map $T: T_{X}M \rightarrow T_{X}M$ we have $T^{\flat}(u,v) = g(Tu,v)$. In particular the $B = S^{\flat}$ holds. Further, the operation of lowering indices commutes with the coavariant differentiation, that is:
\begin{align}\label{commut_lower_cov_der}
(\nabla_{v}T)^{\flat} &= \nabla_{v}T^{\flat}, \, \, \forall v \in {\mathcal X}(M).
\end{align} 
The Codazzi equation \cite{DoCarmo}, in terms of the shape operator or the second fundamental form, takes for any $u, v \in {{\mathcal{X}}}(M),$ one of the following forms respectively:%
\begin{align}\label{codazzi S}
\begin{array}{cc}
(\nabla_{v}S)u -(\nabla_{u}S)v = {\mathcal P} \bar{R}(Ju,Jv)n, \\
(\nabla_{u}B)(v,w) - (\nabla_{v}B)(u,w) = \bar{R}^{\flat}(Ju,Jv,Jw, n)
\end{array}
\end{align}

\section{Kinematics of a hypersurface}

For the kinematics of a hypersurface we use concepts from continuum mechanics assuming that the continuum, or the material body, in question has a configuration $M$ which is a hypersurface in a Riemannian manifold $N$ (the ambient space) and study deformations of the configuration $M$ of the body. This situation may be physically motivated by assuming $M$ is a surface (modelling a membrane) moving in three dimensional Euclidean space or a material curve moving on a surface $N$. The points $X\in M$ are referred to as material points. 
\begin{definition}
We call a \textit{deformation} of a hypersurface ${M}$ in the Riemannian
manifold $N$ an embedding
\begin{align}
\phi:M\ni X\longrightarrow\ x=\phi(X)\in N
\end{align}
of $M$ in $N$. We call $\widetilde{M}=\phi(M)$ the deformed hypersurface.
\end{definition}
We denote by $\widetilde{\phi}:M\rightarrow\widetilde{M}$
the induced diffeomorphism between $M$ and $\widetilde{M}$ . If\ $j:\widetilde
{M}\rightarrow N$ is the canonical embedding of $\widetilde{M}$ in $N$, then
$\phi=j\circ\widetilde{\phi}$. Considering the differentials  $ F(X)$, $\widetilde{F}(X)$,  $J_{x}$, of $\phi$%
,$~\tilde{\phi}$ and $j$, respectively, we have that:
\begin{equation}
F(X)=J_{x}\widetilde{F}(X). \label{FFtilda}%
\end{equation}
The linear map $F(X)$ is called in continuum mechanics the \textit{deformation
gradient at }$X$.
\begin{definition}
A { \it motion} of a hypersurface $M$ in a Riemannian manifold $N$ is a
$1$-parameter family of embeddings $\phi_{t}$, $t \in {I}$, where $I$ is an open interval in ${\mathbb R}$. Equivalently the motion is given by the map
\begin{align}
	\phi:M\times{I} \rightarrow N, x=\phi(X,t)=\phi_{t}(X) \in N
\end{align}
\end{definition}
We denote by $M_{t}=\phi_{t}(M)$ the deformed hypersurface at time $t$. The
mappings $\tilde{\phi}_{t}:M\rightarrow M_{t},\ $such that $\phi_{t}%
=j_{t}\circ\widetilde{\phi_{t}}$ are diffeomorphisms for each $t$ and
$\phi(X,t)=j_{t}(\tilde{\phi}(X,t)).$ The differential of $\tilde{\phi}$ at
$X$ is denoted by $\tilde{F}(X,t):T_{X}M\rightarrow T_{x}M_{t}.$ The map
$j_{t}:M_{t}\rightarrow N$ is the canonical embedding at time $t$ of the
hypersurface $M_{t}$ having differential $J_{t}(x)=dj_{t}(x):T_{x}M_{t}\rightarrow T_{j(x)}N.$ The velocity of the
material point $X$ at time $t$ is the velocity $V(X,t)$ of the curve $\phi
_{X}:%
\mathbf{R}
\rightarrow N,~~\phi_{X}(t)=\phi(X,t)$ i.e.
\begin{align*}
V(X,t)=\frac{\partial}{\partial t}\phi_{X}(t).
\end{align*}
The \textit{velocity field }of the motion is the map $V(\cdot,t):M\rightarrow
TN$, i.e $V\in\mathcal{\bar{X}}(M).$ We often write $V(X,t)=\frac{\partial
}{\partial t}\phi(X,t)$. The \textit{spatial velocity} at the point
$x=\phi(X,t)$ is the vector field $v(\cdot,t):M_{t}\rightarrow TN,$ given
by,$\ $%
\begin{equation}
v(x,t)\mathbf{=}V(\tilde{\phi}_{t}^{-1}(x),t)\ \ \ \text{i.e.}\ \ \ v(\tilde
{\phi}(X,t),t)=V(X,t). \label{vV}%
\end{equation}
We now define the \textit{velocity gradient} of the motion as the map,
\begin{equation}
G(x):T_{x}M_{t}\rightarrow T_{j(x)}N, \, \, G(x)u =\overline{\nabla}_{Ju}v=\overline{\nabla}v(Ju). \label{G defined}%
\end{equation}

 It is useful,  rather than using the motion $\phi_{t}$, to use the map $\phi_{t}(\cdot,\tau):M_{t}\rightarrow N$ representing the deformation from the present configuration $M_{t}$ to a
future one at time $\tau$ and such that:%
\begin{equation}
\phi(X,\tau)=\phi_{t}(\tilde{\phi}(X,t),\tau). \label{x tt}%
\end{equation}
Assuming that the points $X$ or $x$ are implied from the definition of the maps involved, we may omit them.
The trajectory of the point $x\in M_{t}$ is given by the mapping
\begin{equation}
\phi_{t}(x):\mathbf{R\rightarrow}N,~~\phi_{t}(x)(\tau)=\phi_{t}(x,\tau)\in
N,~~\phi_{t}(x,t)=j_{t}(x). \label{trajectory of x}%
\end{equation}
It can be shown that the spatial velocity defined by (\ref{vV}) is the vector
field associated with $\phi_{t}(.,\tau),$ i.e.%
\begin{equation}\label{spatial_velocity}
v_{x}(t)=\frac{\partial}{\partial\tau}|_{\tau=t}\phi_{t}(x,\tau).
\end{equation}

We write $F(\tau),$ $F(t),$ for the deformation gradients corresponding to the
times $\tau$ and $t$ respectively, and $F_{t}(\tau)(x_{t})$ for the space
differential of $\phi_{t}(.,\tau):M_{t}\rightarrow N$ at $x_{t}=\tilde{\phi
}(X,t)$, i.e.
\begin{align*}
F_{t}(\tau)(x_{t}) &= d\phi_{t}(x_{t},\tau):T_{x_{t}}M_{t}\rightarrow T_{x_{\tau
}}N,
\end{align*}
which we call the \textit{relative deformation gradient.} The time derivative of $F_{t}(\tau)$ is defined for each $x \in M_{t}$ along the trajectory of $x$ using the covariant derivative of the ambient space (\cite{DoCarmo}, pg. 50) by 
\begin{align}
\left( \frac{\partial}{\partial\tau}|_{\tau=t}F_{t}(\tau)\right) u &= \bar{\nabla}_{v}F_{t}(\tau)u \label{dF=G}.%
\end{align}
By interchanging space differential and time derivatives and using (\ref{spatial_velocity}), equation (\ref{dF=G}) can be written
\begin{align}\label{dF=G_1}
Gu &= \bar{\nabla}_{v}F_{t}(\tau)u.
\end{align}
The polar decomposition theorem has a long history in continuum mechanics. Let
$E=N$ be a three dimensional Euclidean space, $V$ its associated vector space and $M$ an open region in $E$, 
representing the reference configuration of a material body. If $\phi:M\rightarrow E$ is a deformation of $M$, the theorem
states that the deformation gradient $F(X):T_{X}M=V\rightarrow T_{\phi(X)}N=V$
is decomposed uniquely as $F(X)=R(X)U(X)$, where $R$ and $U$ are linear maps
of $V,$ with $U$ being symmetric and positive definite and $R$ being orthogonal. Hence
the deformation gradient $F$ is decomposed into a pure deformation followed by a
rotation. 
If the body has codimension 1, then $F(X)$ is a map between spaces having different dimensions and the last formula has been modified by C.-S. Man and H. Cohen \cite{COHEN} to read:%
\begin{align*}
F(X)=R(X)J_{X}U(X),
\end{align*}
where $R(X):V\rightarrow V$ is an orthogonal linear map, $U(X):T_{X}%
M\rightarrow T_{X}M$ is a linear symmetric and positive definite and $J_{X}=dj(X):T_{X}%
M\rightarrow V$ is the differential of the canonical embedding $j:M\rightarrow
E$ of the surface into the Euclidean ambient space. Then $U^{2}(X)=F^{*}%
(X)F(X)=\widetilde{F}^{*}(X){*}\widetilde{F}(X)$, where $F^{\star}$ is the adjoint of $F$ relative to the metrics of the hypersurface and the ambient space. Also, the unit normal vector
fields $n(X)$ on the original surface at the point $X$ and $n(x)$ on the
deformed surface at the point $x=\phi(X),$ are related by:%
\begin{equation}
n(x)=R(X)n(X). \label{Rn}%
\end{equation}
This theorem has been used in \cite{NKadianakis} to derive formulae for the
variation of geometrical quantities of a surface. 
Generalizing this to manifolds in \cite{NKad_Ftravlo} and by applying it to the relative deformation gradient $F_{t}(\tau)$ we get:
\begin{equation}
F_{t}(\tau)=R_{t}(\tau)J_{t}U_{t}(\tau), \label{PDT}%
\end{equation}
where $C_{t}(\tau)=U_{t}^{2}(\tau)=F_{t}^{*}(\tau)F_{t}(\tau):T_{x}%
M_{t}\rightarrow T_{x}M_{t}$, $U_{t}(\tau)$ being the \textit{relative right
stretch tensor }and $R_{t}(\tau):T_{x_{t}}N\rightarrow T_{x_{\tau}}N,$ with
$x_{t}=j_{t}(x)$ and $x_{\tau}=\varphi_{t}(\tau)(x)$ being the \textit{relative
rotation tensor}. Further, the unit normal fields $n(t)$ on $M_{t}$ and
$n(\tau)$ on $M_{\tau}$ are related via the rotation $R_{t}(\tau)$ by:
\begin{equation}
n(\tau)=R_{t}(\tau)n(t) \label{n(tau)_n(t)}.%
\end{equation}
For $t=\tau,\ $ we have
\begin{equation}
F_{t}(t)=J_{t},\ \ \ \ R_{t}(t)=I_{T_{x_{t}}N},\ \text{\ \ }U_{t}%
(t)=I_{T_{x}M_{t}}. \label{t=tau1}%
\end{equation}
From (\ref{x tt}) it follows that%
\begin{equation}
F(\tau)=F_{t}(\tau)\tilde{F}(t)=F_{t}(\tau)P_{t}F(t).
\label{F(tau)=F(tau t)PtF(t)}%
\end{equation}
For each $x\in M_{t}$ the \textit{stretching }of the motion is given by:%
\begin{equation}
\mathcal{D}(t)=\frac{\partial U_{t}(\tau)}{\partial\tau}|_{\tau=t}=\frac{1}%
{2}\frac{\partial C_{t}(\tau)}{\partial\tau}|_{\tau=t}:T_{x}M_{t}\rightarrow
T_{x}M_{t}. \label{D(t)}%
\end{equation}
Let $u \in {\mathcal X}(M)$, then $Ju \in \overline{{\mathcal X}}(M)$ and we define a vector field $\bar{u} \in {\mathcal X}(N)$ by setting:
\begin{align}\label{extend_vf_by_motion_defn}
\bar{u} (\phi_{t}(\tau)(x)) &= F_{t}(\tau)u(x) \equiv u_{t}(\tau)(x),
\end{align}  
then
\begin{align}\label{extend_vf_by_motion_eqn1a}
\bar{u}(\phi_{t}(t)(x))&= \bar{u}(j(x)) = Ju.
\end{align}
Since $\bar{u}$ is defined by the motion itself:
\begin{align}\label{extend_vf_by_motion_eqn2a}
[ v, \bar{u}] &= 0.
\end{align} 
Using (\ref{spatial_velocity}) it follows that (\cite{DoCarmo}, p.50) the covariant derivative along the trajectory of $x\in M_{t}$ is
\begin{align}\label{extend_vf_by_motion_eqn1c}
\overline{\nabla}_{v}\bar{u} &= \frac{\partial u_{t}(\tau)}{\partial \tau}|_{\tau =t}(x).
\end{align}
Therefore we have the following expressions for the velocity gradient:
\begin{align}\label{extend_vf_by_motion_eqn2b}
Gu &= \overline{\nabla}_{Ju}v = \overline{\nabla}_{\bar{u}}v = \overline{\nabla}_{v}\bar{u}.
\end{align}
Similarly the time derivative of the tensor field $R_{t}(\tau)$ is defined for each $x\in M_{t}$ along the trajectory of $x,$ by
\begin{equation}
W(t)=\frac{\partial R_{t}(\tau)}{\partial\tau}|_{\tau=t}=\overline{\nabla}%
_{v}R_{t}(\tau)|_{j(x)}:T_{j(x)}N\rightarrow T_{j(x)}N. \label{W(t)}%
\end{equation}
Some of the basic relations between the above kinematical quantities are summarized in the following lemma proved in  \cite{NKad_Ftravlo}.
\begin{lemma}
For a fixed time $t$, let $\phi_{t}(\tau)$ be a relative motion of the hypersurface
$M_{t}$ in the Riemannian manifold $N$ with velocity vector field ${v}$ which
splits into tangential and normal parts as: $v=v_{||}+v_{n}{n}=J{v}^{||}%
+v_{n}{n}$, where ${v}^{||}\in{{{{\mathcal{X}}}}}(M_{t})$. Then:
\begin{align}
G &= J {\mathcal D}+WJ, \, \, {\mathcal P}G = {\mathcal D} + {\mathcal P}WJ = \nabla v^{||} -v_{n}S \label{G=JD+WJ}\\
2\mathcal{D} &  =\nabla{v}^{||}+\nabla{v}^{||^{\star}}-2v_{n}S,\label{DandS}\\
{\pounds }_{v^{||}} g &= 2\mathcal{D}^{\flat}+2v_{n}B  = ( \nabla v^{||} + \nabla v^{||^{*}})^{\flat} %
.\label{DgB}
\end{align}
\end{lemma}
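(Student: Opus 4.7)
The plan is to start from the polar decomposition $F_{t}(\tau) = R_{t}(\tau)\, J_{t}\, U_{t}(\tau)$ and differentiate with respect to $\tau$ at $\tau = t$. By the product rule, together with $R_{t}(t) = I$ and $U_{t}(t) = I$ from (\ref{t=tau1}), and the definitions (\ref{W(t)}) and (\ref{D(t)}), this gives $\frac{\partial F_{t}(\tau)}{\partial\tau}\big|_{\tau = t} = WJ + J\mathcal{D}$. The left-hand side equals $G$ by (\ref{dF=G_1}) applied pointwise, yielding the first identity $G = J\mathcal{D} + WJ$ in (\ref{G=JD+WJ}). Applying $\mathcal{P}$ and using $\mathcal{P}J = I$ from (\ref{PJ}) then gives $\mathcal{P}G = \mathcal{D} + \mathcal{P}WJ$.

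For the intrinsic expression $\mathcal{P}G = \nabla v^{||} - v_{n}S$, I would compute $Gu = \overline{\nabla}_{Ju}(Jv^{||} + v_{n}n)$ directly. The Gauss equation (\ref{GaussEquation}) handles $\overline{\nabla}_{Ju}(Jv^{||}) = J\nabla_{u}v^{||} + B(u,v^{||})n$; the Leibniz rule handles $\overline{\nabla}_{Ju}(v_{n}n) = (Ju)(v_{n})\,n + v_{n}\overline{\nabla}_{Ju}n$; and since $\bar{g}(n,n)=1$ forces $\overline{\nabla}_{Ju}n$ to be tangential, the definition (\ref{ShapeOperator}) gives $\overline{\nabla}_{Ju}n = -JSu$. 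Projecting via $\mathcal{P}$ annihilates the normal components and leaves $\nabla_{u}v^{||} - v_{n}Su$, which completes (\ref{G=JD+WJ}).

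For equation (\ref{DandS}), the key observation is that each $R_{t}(\tau)$ is an isometry, so differentiating $R_{t}(\tau)^{\star}R_{t}(\tau) = I$ at $\tau = t$ shows that $W$ is skew-adjoint on $T_{j(x)}N$. A short computation shows that the adjoint of $J$ with respect to the metrics $g$ and $\bar{g}$ equals $\mathcal{P}$: for any $W\in T_{j(X)}N$ and $u\in T_{X}M$, one has $\bar{g}(W,Ju)=\bar{g}(\pi W, Ju)=\bar{g}(J\mathcal{P}W, Ju)=g(\mathcal{P}W, u)$, using $\pi = J\mathcal{P}$ and (\ref{pi}). Therefore $(\mathcal{P}WJ)^{\star} = J^{\star}W^{\star}\mathcal{P}^{\star} = -\mathcal{P}WJ$, and symmetrizing $\mathcal{P}G = \mathcal{D} + \mathcal{P}WJ$ eliminates this term, giving $2\mathcal{D} = \mathcal{P}G + (\mathcal{P}G)^{\star}$. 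Substituting the intrinsic expression of $\mathcal{P}G$ and using that $S$ is self-adjoint yields (\ref{DandS}).

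Finally, for (\ref{DgB}), I would apply the standard Lie-derivative identity $(\pounds_{v^{||}}g)(u,w) = g(\nabla_{u}v^{||}, w) + g(u, \nabla_{w}v^{||})$ on $M_{t}$. Rewriting both terms via $T^{\flat}(u,w) = g(Tu,w)$ converts the right-hand side into $\bigl((\nabla v^{||}) + (\nabla v^{||})^{\star}\bigr)^{\flat}(u,w)$, which is the last expression in (\ref{DgB}). The equality with $2\mathcal{D}^{\flat} + 2v_{n}B$ is then immediate from (\ref{DandS}) together with $B = S^{\flat}$ from (\ref{2ndff}). The main obstacle in writing the proof cleanly will be juggling the adjoints across spaces of different dimensions, i.e.\ identifying $J^{\star} = \mathcal{P}$ and correctly transposing $\mathcal{P}WJ$; once this is in place, everything else reduces to the Gauss equation and routine tensorial bookkeeping.
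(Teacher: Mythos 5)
Your argument is correct, and it is exactly the derivation this lemma is meant to have: the paper itself offers no proof here (it quotes the result from the authors' earlier work, Kadianakis--Travlopanos 2013), so the only check available is against the machinery of Section~3, with which your steps are fully consistent --- differentiating the polar decomposition at $\tau=t$, identifying $J^{\star}=\mathcal{P}$, using skew-adjointness of $W$ from orthogonality of $R_{t}(\tau)$, and the Gauss/Weingarten formulas for the tangential split. The only cosmetic flaw is reusing the letter $W$ both for the spin tensor and for a generic ambient vector in the adjoint computation; otherwise nothing is missing.
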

\section{The notion of variation}
In \cite{NKad_Ftravlo} we have defined a $\tau$ - dependent geometry on the instantaneous hypersurface $M_{t}$ by pulling back on $M_{t}$ the geometry of $M_{\tau}$, using the motion. Using this procedure we have defined the $\tau$ - dependent metric tensor and the $\tau$-dependent shape operator $S_{t}(\tau):$
\begin{align}
g_{t}(\tau)(u,w)  &  =\bar{g}(F_{t}(\tau)u,~F_{t}(\tau)w)\label{gtt}\\
F_{t}(\tau)S_{t}(\tau)u &  =-\overline{\nabla}_{F_{t}(\tau)u}n(\tau);
\label{tt shape2}%
\end{align}
The $\tau$ - dependent Levi - Civita connection $\nabla_{t}(\tau)$ on $M_{t}$, is defined in a similar way by using the motion $\phi_{t}(\tau)$: 
\begin{align}\label{sec2_var_con_eqn1b}
F_{t}(\tau)\nabla_{t}(\tau)_{u}w &= \overline{\nabla}_{F_{t}(\tau)u}F_{t}(\tau)w - \overline{g} \left( \overline{\nabla}_{F_{t}(\tau)u}F_{t}(\tau)w, n(\tau) \right)n(\tau),
\end{align}   
One can show, using typical arguments, that this connection is actually the unique symmetric connection compatible with the pulled-back metric $g_{t}(\tau)$. This means that the $\tau$ - dependent Levi - Civita connection is equivalently given by the $\tau$ - dependent metric tensor from the formula (\cite{DoCarmo}, p. 50) 
\begin{align}\label{sec2_var_Christof_eqn1b}
2 g_{t}(\tau)( \nabla_{t}(\tau)(u,w),z ) &= u g_{t}(\tau) ( w, z ) + w g_{t}(\tau)( u , z ) -  z g_{t}(\tau)( u, w ) \nonumber \\
&- g_{t}(\tau)( u, [ w, z]) + g_{t}(\tau)( w, [ z, u]) + g_{t}(\tau)( z, [ u, w]).
\end{align}
Since $\nabla_{t}(\tau)$ and $\nabla_{t}(t) = \nabla (t)$ are both defined on $M_{t}$ it makes sense to consider the time rate
\begin{align}\label{var_conn_definition}
( \delta \nabla )(u,w) &=  \frac{\partial \nabla_{t}(\tau)}{\partial \tau}|_{\tau = t}(u,w) ={\rm lim}_{\tau \rightarrow t} \frac{1}{\tau - t} \left\{ \nabla_{t}(\tau)(u,w) - \nabla_{t}(t)(u,w) \right\}. 
\end{align}
Since the difference of the two connections $\nabla_{t}(\tau) - \nabla_{t}(t)$ at any point is a  tensor field of type (1,2), the variation $\delta \nabla $ is a tensor of the same order. \\ 
The following variation formulas for the metric tensor field and the unit normal vector field have been proved in \cite{NKad_Ftravlo}:
\begin{align}
\delta g &= -2v_{n}B + {\pounds}_{v^{||}}g=2 {\mathcal D}^{\flat} \label{var_metric_1},\, \, \, \delta n = {\ W}n = -JSv^{||} -J \nabla v_{n}
\end{align}
\section{Variation of the Levi - Civita connection of a moving hypersurface.}
In this section we present the main results of this paper. We need the following lemma 
\begin{lemma}\label{lemma_5_1_aux}
Consider the extentions $\overline{u}, \overline{w}$ of $u$, $w$ which are defined by the motion with velocity $v$ and satisfying (\ref{extend_vf_by_motion_defn}),(\ref{extend_vf_by_motion_eqn1a}) and (\ref{extend_vf_by_motion_eqn2a}). Then: 
\begin{align}\label{sec2_lemma_eqn1c} 
\frac{\partial}{\partial \tau}|_{\tau =t} \overline{\nabla}_{F_{t}(\tau)u}F_{t}(\tau)w &= \overline{\nabla}_{v} \overline{\nabla}_{\overline{u}}\overline{w} = \overline{R}(v, Ju)\overline{w} + \overline{\nabla}_{Ju}Gw.
\end{align} 
\end{lemma}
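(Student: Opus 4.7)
The plan is to prove the two equalities in turn, using in a crucial way the construction of the extensions $\overline{u}, \overline{w}$, and in particular the fact $[v,\overline{u}]=0$.

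For the first equality, I would argue that by the defining property (\ref{extend_vf_by_motion_defn}), $\overline{u}$ evaluated at the trajectory point $\phi_{t}(\tau)(x)$ equals $F_{t}(\tau)u$, and similarly for $\overline{w}$. Therefore the vector $\overline{\nabla}_{F_{t}(\tau)u}F_{t}(\tau)w$ is nothing but the value of the ambient vector field $\overline{\nabla}_{\overline{u}}\overline{w}$ at $\phi_{t}(\tau)(x)$. The map $\tau\mapsto\overline{\nabla}_{\overline{u}}\overline{w}(\phi_{t}(\tau)(x))$ is then a vector field along the trajectory of $x$, and its $\tau$-derivative at $\tau=t$ is, by the discussion leading to (\ref{extend_vf_by_motion_eqn1c}), the covariant derivative along the trajectory with tangent vector $v$. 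This gives $\overline{\nabla}_{v}\overline{\nabla}_{\overline{u}}\overline{w}$ at $j(x)$.

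For the second equality, I would invoke the definition of the Riemann curvature of $\overline{\nabla}$:
\begin{equation*}
\overline{R}(v,\overline{u})\overline{w}=\overline{\nabla}_{v}\overline{\nabla}_{\overline{u}}\overline{w}-\overline{\nabla}_{\overline{u}}\overline{\nabla}_{v}\overline{w}-\overline{\nabla}_{[v,\overline{u}]}\overline{w}.
\end{equation*}
By (\ref{extend_vf_by_motion_eqn2a}) the last term vanishes, so $\overline{\nabla}_{v}\overline{\nabla}_{\overline{u}}\overline{w}=\overline{R}(v,\overline{u})\overline{w}+\overline{\nabla}_{\overline{u}}\overline{\nabla}_{v}\overline{w}$. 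At the base time $\tau=t$ we have $\overline{u}|_{M_{t}}=Ju$ from (\ref{extend_vf_by_motion_eqn1a}); since curvature at a point depends only on the pointwise values of its arguments, the first term becomes $\overline{R}(v,Ju)\overline{w}$. For the second term, (\ref{extend_vf_by_motion_eqn2b}) identifies $\overline{\nabla}_{v}\overline{w}$ with the velocity gradient $Gw$ as a vector field on $M_{t}$, and because $Ju$ is tangent to $M_{t}$ the directional derivative $\overline{\nabla}_{\overline{u}}(\overline{\nabla}_{v}\overline{w})$ at $j(x)$ depends only on values along $M_{t}$, producing $\overline{\nabla}_{Ju}Gw$. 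Assembling these identifications yields the claimed formula.

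The only real obstacle is book-keeping: one has to be confident that interchanging the $\tau$-derivative along the trajectory with the ambient covariant derivative in the spatial direction is legitimate, and that the substitutions $\overline{u}\mapsto Ju$ and $\overline{\nabla}_{v}\overline{w}\mapsto Gw$ are permitted on $M_{t}$. Both reductions are pointwise-in-$\tau$ identifications at $\tau=t$ that rely on the vanishing of $[v,\overline{u}]$ (so that no correction term from the Lie bracket appears) and on $Ju$ being tangent to $M_{t}$ (so that only restrictions to $M_{t}$ matter for the directional derivative). Once these two observations are made explicit, the lemma reduces to a direct application of the curvature identity.
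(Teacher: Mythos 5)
Your proposal is correct and follows essentially the same route as the paper's own proof: both identify the $\tau$-derivative along the trajectory with $\overline{\nabla}_{v}\overline{\nabla}_{\overline{u}}\overline{w}$, then apply the ambient curvature identity, kill the bracket term via $[v,\overline{u}]=0$, and substitute $\overline{u}|_{M_{t}}=Ju$ and $\overline{\nabla}_{v}\overline{w}=Gw$. Your write-up is in fact slightly more explicit than the paper's about why the first identification and the two substitutions at $\tau=t$ are legitimate, which is a welcome addition rather than a deviation.
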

\begin{proof}
Since time derivatives along the motion  are covariant derivatives in the direction of the velocity vector field we have that:
\begin{align*}
\frac{\partial}{\partial \tau}|_{\tau =t} \overline{\nabla}_{F_{t}(\tau)u}F_{t}(\tau)w &= \overline{\nabla}_{v}\overline{\nabla}_{\bar u}\overline{w} 
= \overline{R}(v, \overline{u})\overline{w} + \overline{\nabla}_{\overline{u}}\overline{\nabla}_{v}\overline{w} + \overline{\nabla}_{[v, \overline{u}]}\overline{w} \\
&= \overline{R}(v, \overline{u})\overline{w} + \overline{\nabla}_{\overline{u}}\overline{\nabla}_{v}\overline{w},
\end{align*}
Using the relations (\ref{extend_vf_by_motion_defn}),(\ref{extend_vf_by_motion_eqn1a}), (\ref{extend_vf_by_motion_eqn2a}) and (\ref{G defined})
as well as the relations
\begin{align*}
\overline{\nabla}_{v} \overline{w} &= \overline{\nabla}_{\overline{w}}v +[v, \overline{w}]= \overline{\nabla}_{Jw}v =Gw
\end{align*}
and
\begin{align*}
\overline{R}(v, \overline{u})\overline{w} &= \overline{R}(v, Ju)\overline{w},
\end{align*}
formula (\ref{sec2_lemma_eqn1c}) follows.
\end{proof}
\begin{proposition}
For any $u,w \in{\mathcal X}(M)$ and their extensions $\bar{u},\bar{w} \in {\mathcal X}(N)$, the variation of the Levi - Civita connection of a hypersurface $M$ moving in a Riemannian manifold $N$ with velocity $v$ is given by the following equivalent expressions: 
\begin{align}\label{sec2_var_con_eqn2a}
( \delta \nabla ) (u,w) &= - {\mathcal P}G \nabla_{u}w + {\mathcal P}\overline{\nabla}_{Ju}Gw - B(u,w){\mathcal P}Wn + {\mathcal P}\overline{R}(v, Ju)\overline{w},
\end{align}
\begin{align}\label{sec2_var_Christof_eqn1d}
( \delta \nabla ({u},w))^{\flat}(z) &= -2 {\mathcal D}^{\flat}( \nabla_{u}w,z ) + u ( {\mathcal D}^{\flat}(w,z))+ w({\mathcal D}^{\flat}(u,z))  - z ({\mathcal D}^{\flat}(u,w)) \nonumber \\
&- {\mathcal D}^{\flat}(u, [w,z]) + {\mathcal D}^{\flat}(w, [z,u])+ {\mathcal D}^{\flat}(z, [u,w]). \\
\label{sec2_var_Christof_eqn1d_A}
g (\delta \nabla (u,w),z) &= g( (\nabla_{u}{\mathcal D})w,z)+ g((\nabla_{w}{\mathcal D})u, z) - g((\nabla_{z}{\mathcal D})u, w). 
\end{align}
\end{proposition}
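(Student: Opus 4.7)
The plan is to establish all three formulas by differentiating at $\tau=t$ two independent characterizations of $\nabla_{t}(\tau)$: the extrinsic projection definition (\ref{sec2_var_con_eqn1b}) will yield (\ref{sec2_var_con_eqn2a}), while the intrinsic Koszul formula (\ref{sec2_var_Christof_eqn1b}) will yield (\ref{sec2_var_Christof_eqn1d}); rewriting the latter in covariant form then produces (\ref{sec2_var_Christof_eqn1d_A}).

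For (\ref{sec2_var_con_eqn2a}), I interpret both sides of (\ref{sec2_var_con_eqn1b}) as vector fields along the trajectory of $x$, so that $\partial_{\tau}|_{\tau=t}$ agrees with $\overline{\nabla}_{v}$. On the left, the Leibniz rule applied to the product $F_{t}(\tau)\,\nabla_{t}(\tau)_{u}w$ gives $G\nabla_{u}w + J\,\delta\nabla(u,w)$, using (\ref{dF=G_1}) on the frozen argument $\nabla_{u}w$ and $F_{t}(t)=J$. On the right, Lemma \ref{lemma_5_1_aux} handles the unprojected term, while the normal correction $\bar{g}(\overline{\nabla}_{F_{t}(\tau)u}F_{t}(\tau)w,n(\tau))\,n(\tau)$ is differentiated by a second product rule, invoking metric-compatibility of $\overline{\nabla}$, the identity $\bar{g}(\overline{\nabla}_{Ju}Jw,n)=B(u,w)$ from (\ref{2ndff}), and $\overline{\nabla}_{v}n|_{\tau=t}=\delta n = Wn$ from (\ref{var_metric_1}). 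Combining, the right-hand side collapses to $\pi\overline{R}(v,Ju)\bar{w} + \pi\overline{\nabla}_{Ju}Gw - B(u,w)\,Wn$ plus a scalar multiple of $n$. Applying $\mathcal{P}$ to the whole identity, using $\mathcal{P}n=0$ from (\ref{Pn}) to kill the residual normal contributions, and solving for $\delta\nabla(u,w)$ yields (\ref{sec2_var_con_eqn2a}).

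For (\ref{sec2_var_Christof_eqn1d}), I apply $\partial_{\tau}|_{\tau=t}$ to both sides of (\ref{sec2_var_Christof_eqn1b}). The left side produces $2(\delta g)(\nabla_{u}w,z)+2g(\delta\nabla(u,w),z)$, and on the right each $g_{t}(\tau)$ contributes $\delta g = 2\mathcal{D}^{\flat}$ by (\ref{var_metric_1}); dividing by $2$ and moving the $\delta g$ term to the right-hand side delivers (\ref{sec2_var_Christof_eqn1d}). To reach (\ref{sec2_var_Christof_eqn1d_A}), I expand each directional derivative via $u(\mathcal{D}^{\flat}(w,z)) = (\nabla_{u}\mathcal{D})^{\flat}(w,z) + \mathcal{D}^{\flat}(\nabla_{u}w,z) + \mathcal{D}^{\flat}(w,\nabla_{u}z)$, rewrite every bracket as $[a,b]=\nabla_{a}b-\nabla_{b}a$ (torsion-freeness of $\nabla$), and invoke symmetry of $\mathcal{D}^{\flat}$; the $\mathcal{D}^{\flat}(\nabla_{a}b,\cdot)$ terms cancel in pairs, leaving exactly $g((\nabla_{u}\mathcal{D})w,z) + g((\nabla_{w}\mathcal{D})u,z) - g((\nabla_{z}\mathcal{D})u,w)$ after using (\ref{commut_lower_cov_der}).

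The main obstacle is the extrinsic bookkeeping in (\ref{sec2_var_con_eqn2a}): both sides of (\ref{sec2_var_con_eqn1b}) are compositions of $\tau$-dependent objects (the map $F_{t}(\tau)$ with the $\tau$-dependent vector $\nabla_{t}(\tau)_{u}w$, and the scalar $\bar{g}(\,\cdot\,,n(\tau))$ with the $\tau$-dependent normal $n(\tau)$), so several covariant product rules along the trajectory must be combined consistently, and one must verify that every piece lying outside $J(TM)$ is absorbed when $\mathcal{P}$ is applied. By contrast, once $\delta g = 2\mathcal{D}^{\flat}$ is available, the intrinsic derivation of (\ref{sec2_var_Christof_eqn1d})--(\ref{sec2_var_Christof_eqn1d_A}) is essentially algorithmic.
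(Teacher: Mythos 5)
Your proposal follows essentially the same route as the paper's own proof: equation (\ref{sec2_var_con_eqn2a}) is obtained there too by differentiating the definition (\ref{sec2_var_con_eqn1b}) along the trajectory, applying Lemma \ref{lemma_5_1_aux} together with a product rule on the normal correction term and then projecting with $\mathcal{P}$, while (\ref{sec2_var_Christof_eqn1d}) and (\ref{sec2_var_Christof_eqn1d_A}) come from differentiating the Koszul formula (\ref{sec2_var_Christof_eqn1b}) with $\delta g = 2\mathcal{D}^{\flat}$ and then expanding via the covariant Leibniz rule and torsion-freeness exactly as you describe. The argument is correct and complete as outlined.
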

\begin{proof}
The first formula is proved using the definition (\ref{sec2_var_con_eqn1b}) and lemma (\ref{lemma_5_1_aux}). We have:
\begin{align*}
\frac{\partial}{\partial \tau}|_{\tau =t} F_{t}(\tau) {\nabla}_{F_{t}(\tau)u}F_{t}(\tau)w &= \frac{\partial}{\partial \tau}|_{\tau =t} \overline{\nabla}_{F_{t}(\tau)u}F_{t}(\tau)w \\
&- \frac{\partial}{\partial \tau}|_{\tau =t} \left\{ \overline{g} \left( \overline{\nabla}_{F_{t}(\tau)u}F_{t}(\tau)w, n(\tau) \right)n(\tau) \right\}
\end{align*} 
hence
\begin{align}\label{sec2_var_con_eqn2c}
G \nabla_{u}w + J (\delta \nabla)(u,w)&= \overline{R}(v, \overline{u})\overline{w} +  \overline{\nabla}_{\overline{u}}Gw \nonumber \\
&- \left\{ \overline{g}\left( \frac{\partial}{\partial \tau}|_{\tau =t}\overline{\nabla}_{F_{t}(\tau)u}F_{t}(\tau)w, n(t) \right) + \overline{g}\left( \overline{\nabla}_{F_{t}(\tau)u}F_{t}(\tau)w, \delta n \right) \right\} n(t)  \nonumber \\
&-  \overline{g} ( \overline{\nabla}_{\overline{u}} \overline{w},n(\tau) )  \delta n.  
\end{align}
Applying the projection ${\mathcal P}_{t}$ on $M_{t}$ on the last relation we get
(\ref{sec2_var_con_eqn2a}). \\
Formula (\ref{sec2_var_Christof_eqn1d}) is an immediate consequence of  (\ref{sec2_var_Christof_eqn1b})and (\ref{var_metric_1}).
To prove (\ref{sec2_var_Christof_eqn1d_A}) we observe that the terms involved in equation (\ref{sec2_var_Christof_eqn1d}) can be further decomposed as follows:
\begin{align*}
-2 g ({\mathcal D}\nabla_{u}w, z) &= - g ({\mathcal D}\nabla_{u}w, z)- g ({\mathcal D}\nabla_{u}w, z), \, {\bf (i)}\\ 
u (g({\mathcal D}w, z) ) &= g((\nabla_{u}{\mathcal D})w,z)+g({\mathcal D}\nabla_{u}w, z)+ g({\mathcal D}w, \nabla_{u}z), \, {\bf (ii)} \\ 
w(g({\mathcal D}u, z)) &= g( (\nabla_{w}{\mathcal D})u,z) + g({\mathcal D}\nabla_{w}u,z)+ g({\mathcal D}u, \nabla_{w}z) \, {\bf (iii)}\\ 
-zg({\mathcal D}u,w) &= - g((\nabla_{z}{\mathcal D})u,w)- g({\mathcal D}\nabla_{z}u,w)- g({\mathcal D}u, \nabla_{z}w) \, {\bf (iv)}\\
- g({\mathcal D}u, [w,z]) &= - g({\mathcal D}u, \nabla_{w}z) + g({\mathcal D}u, \nabla_{z}w) \, {\bf (v)} \\ 
g({\mathcal D}w, [z,u]) &= g({\mathcal D}w, \nabla_{z}u) - g({\mathcal D}w, \nabla_{u}z), \, {\bf (vi)} \\ 
g({\mathcal D}z, [u,w] ) &= g({\mathcal D}z, \nabla_{u}w) - g({\mathcal D}z, \nabla_{w}u), \, {\bf (vii)}
\end{align*}
Using the $g$ - symmetry of the rate of deformation ${\mathcal D} $ and adding the terms of the relations ${\bf (i)}$ -${\bf (vii)}$ we derive (\ref{sec2_var_Christof_eqn1d_A}).
\end{proof}
The next formula gives the variation of the connection in terms of geometrical quantities.
\begin{proposition}
For any $u,w\in{\mathcal X}(M)$ and their extensions $\bar{u}, \bar{w} \in{\mathcal X}(N)$, the variation of the Levi - Civita connection of a hypersurface moving with velocity field $v = Jv^{||} + v_{n}n$, is given by the formula:
\begin{align}\label{sec2_var_generic_eqn2b}
( \delta \nabla )(u,w) &= - v_{n} (\nabla_{u}S)w - \left\{ w(v_{n})Su + u(v_{n})Sw \right\} + B(u,w) \nabla v_{n}  \nonumber \\
&+ v_{n} {\mathcal P}\overline{R}(n, Ju)\overline{w} + {( \pounds}_{v^{||}}\nabla )_{u}w .
\end{align}
\end{proposition}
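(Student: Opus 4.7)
The strategy is to start from the equivalent expression (\ref{sec2_var_con_eqn2a}) of the preceding proposition and rewrite each of its four terms using the velocity decomposition $v=Jv^{||}+v_{n}n$, thereby reducing everything to intrinsic hypersurface quantities plus the single residual ambient curvature term ${\mathcal P}\overline{R}(n,Ju)\overline{w}$.

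First, I invoke the kinematic identities (\ref{G=JD+WJ}) and (\ref{var_metric_1}) to read off ${\mathcal P}Gu=\nabla_{u}v^{||}-v_{n}Su$ and ${\mathcal P}Wn=-Sv^{||}-\nabla v_{n}$. Substituting the former into $-{\mathcal P}G\nabla_{u}w$ and the latter into $-B(u,w){\mathcal P}Wn$ is immediate. For the term ${\mathcal P}\overline{\nabla}_{Ju}Gw$ I first expand $Gw=J(\nabla_{w}v^{||}-v_{n}Sw)+(B(w,v^{||})+w(v_{n}))n$, then differentiate along $Ju$ using the Gauss equation (\ref{GaussEquation}) on the $J$-piece and the Weingarten relation $\overline{\nabla}_{Ju}n=-JSu$ on the normal piece, and finally project to obtain the tangential terms $\nabla_{u}\nabla_{w}v^{||}-u(v_{n})Sw-v_{n}\nabla_{u}(Sw)-(B(w,v^{||})+w(v_{n}))Su$.

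For the ambient curvature term I split ${\mathcal P}\overline{R}(v,Ju)\overline{w}={\mathcal P}\overline{R}(Jv^{||},Ju)Jw+v_{n}{\mathcal P}\overline{R}(n,Ju)\overline{w}$ and apply the Gauss equation (\ref{Gauss_equation_hypersurface}) to the first summand, turning it into $R(v^{||},u)w+B(v^{||},w)Su-B(u,w)Sv^{||}$. Now I assemble the four contributions: the two occurrences of $B(u,w)Sv^{||}$ cancel, the $B(w,v^{||})Su$ and $B(v^{||},w)Su$ terms cancel by symmetry of $B$, and the leftover $v_{n}$-terms combine into $-v_{n}(\nabla_{u}S)w$ through the definition of the covariant derivative of the $(1,1)$-tensor $S$.

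What survives involving $v^{||}$ is $\nabla_{u}\nabla_{w}v^{||}-\nabla_{\nabla_{u}w}v^{||}+R(v^{||},u)w$, and the final step is to recognize this expression as $(\pounds_{v^{||}}\nabla)(u,w)$: this is the classical Koszul identity for the Lie derivative of a torsion-free affine connection, which one verifies by expanding $\pounds_{v^{||}}(\nabla_{u}w)-\nabla_{[v^{||},u]}w-\nabla_{u}[v^{||},w]$ with torsion freeness $[X,Y]=\nabla_{X}Y-\nabla_{Y}X$ together with the definition (\ref{InducedCurvature}) of $R$. The main obstacle I anticipate is purely combinatorial, namely tracking the several copies of $B(\cdot,\cdot)S(\cdot)$ and of $v_{n}S\nabla$ so that the advertised cancellations land correctly; the only conceptual step is recognizing the Lie derivative identity at the end, which is what compresses the long expression into the clean form (\ref{sec2_var_generic_eqn2b}).
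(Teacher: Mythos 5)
Your proposal is correct and follows essentially the same route as the paper: it starts from formula (\ref{sec2_var_con_eqn2a}), decomposes the four terms via the Gauss and Weingarten relations and the splitting of ${\mathcal P}G$ and ${\mathcal P}Wn$, and observes the same cancellations. The only cosmetic difference is the last step, where you identify the residual tangential part as $\nabla_{u}\nabla_{w}v^{||}-\nabla_{\nabla_{u}w}v^{||}+R(v^{||},u)w$ and recognize it as $(\pounds_{v^{||}}\nabla)(u,w)$ in its covariant form, whereas the paper converts everything to Lie brackets and matches the bracket expression (\ref{Lie_der_connection}) directly; the two are equivalent by torsion-freeness, as you note.
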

\begin{proof}
We start by decomposing each  term of (\ref{sec2_var_con_eqn2a}) using (\ref{Gauss_equation_hypersurface}), (\ref{Gauss_equation_hypersurface_n}) and (\ref{G=JD+WJ}):
\begin{align}\label{sec2_var_con_eqn2d}
-{\mathcal P}G \nabla_{u}w &= v_{n}S \nabla_{u}w - \nabla_{\nabla_{u}w} v^{||}= v_{n}S \nabla_{u}w - \nabla_{v^{||}}\nabla_{u}w - [\nabla_{u}w, v^{||}] \nonumber \\
&=v_{n}S \nabla_{u}w + [v^{||}, \nabla_{u}w] - \nabla_{[v^{||},u]}w - \nabla_{u}\nabla_{v^{||}}w - R(v^{||},u)w,
\end{align}
\begin{align}   
\label{sec2_var_con_eqn2e}
{\mathcal P}\overline{\nabla}_{Ju}Gw &= \nabla_{u}{\mathcal P}Gw - (B(v^{||},w)+w(v_{n}))Su \nonumber \\
&= - v_{n}\nabla_{u}Sw - u(v_{n})Sw - w(v_{n})Su + \nabla_{u}\nabla_{w}v^{||} - g(Sv^{||},w)Su, 
 \end{align}
 \begin{align}
\label{sec2_var_con_eqn2f}
- B(u,w) {\mathcal P}Wn &= B(u,w) \nabla v_{n} + g(Su,w)Sv^{||} ,
\end{align}
\begin{align}
\label{sec2_var_con_eqn2g}
{\mathcal P} \overline{R}(v, Ju)\overline{w} &= v_{n}{\mathcal P}\overline{R}(n, Ju)\overline{w} + {\mathcal P}\overline{R}(Jv^{||}, Ju)\overline{w} \nonumber \\
&=v_{n}{\mathcal P} \overline{R}(n, Ju)\overline{w} + R(v^{||},u)w + g(Sv^{||},w)Su - g(Su, w)Sv^{||} .
\end{align} 
therefore (\ref{sec2_var_con_eqn2a}) becomes
\begin{align}\label{sec2_var_con_eqn2g_AA}
( \delta \nabla )(u,w) &= v_{n}S \nabla_{u}w + [v^{||}, \nabla_{u}w] - \nabla_{[v^{||},u]}w - \nabla_{u}\nabla_{v^{||}}w - R(v^{||},u)w \nonumber \\
&- v_{n}\nabla_{u}Sw - u(v_{n})Sw - w(v_{n})Su + \nabla_{u}\nabla_{w}v^{||} - g(Sv^{||},w)Su \nonumber \\
&+ B(u,w) \nabla v_{n} + g(Su,w)Sv^{||} + v_{n}{\mathcal P} \overline{R}(n, Ju)\overline{w} + R(v^{||},u)w \nonumber \\
&+ g(Sv^{||},w)Su - g(Su, w)Sv^{||}
\nonumber \\ 
&= -v_{n}(\nabla_{u}S)w - \{ u(v_{n})Sw + w(v_{n})Su \} + B(u,w) \nabla v_{n} \nonumber \\
&+ v_{n}{\mathcal P} \overline{R}(n, Ju)\overline{w} + [v^{||}, \nabla_{u}w] - \nabla_{[v^{||},u]}w - \nabla_{u}\nabla_{v^{||}}w \nonumber \\
&- R(v^{||},u)w + \nabla_{u}\nabla_{w}v^{||} + R(v^{||},u)w 
\end{align}
which reduces to:
\begin{align}\label{sec2_var_con_eqn2g_AC}
( \delta \nabla )(u,w) &= 
- v_{n} (\nabla_{u}S)w - \left\{ w(v_{n})Su + u(v_{n})Sw \right\} + B(u,w) \nabla v_{n}  \nonumber \nonumber \\
&+ v_{n} {\mathcal P}\overline{R}(n, Ju)\overline{w} + [v^{||}, \nabla_{u}w] - \nabla_{[v^{||},u]}w - \nabla_{u}[v^{||},w].
\end{align}
Since the Lie derivative of the connection $\nabla$ is given by (\cite{Yano})
\begin{align}\label{Lie_der_connection}
({\pounds}_{v^{||}}\nabla )(u,w) &= [v^{||}, \nabla_{u}w] - \nabla_{[v^{||},u]}w - \nabla_{u}[v^{||},w],
\end{align}  
we finally get (\ref{sec2_var_generic_eqn2b}).
\end{proof}
Since covariant derivative commutes with the index lowering operation, we get the following:
\begin{corollary}
Formula (\ref{sec2_var_Christof_eqn1d_A}) can be written in the equivalent form:
\begin{align}\label{sec2_var_Christof_eqn1d_A_altern1a}
g( (\delta \nabla) ({u},w), z ) &= \left(  \nabla _{u}{\mathcal D}^{\flat}\right) (w,z)+ \left(  \nabla _{w}{\mathcal D}^{\flat}\right) (u,z) -  \left(  \nabla _{z}{\mathcal D}^{\flat}\right) (u,w)
\end{align}
\end{corollary}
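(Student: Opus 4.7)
The plan is to read the corollary as a purely algebraic restatement of Proposition's formula (\ref{sec2_var_Christof_eqn1d_A}), obtained by invoking the commutativity of the musical isomorphism $\flat$ with the Levi-Civita covariant derivative, as stated in equation (\ref{commut_lower_cov_der}). No new geometry needs to be introduced: the entire content is to push the metric contraction inside the covariant derivative term by term.

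First I would rewrite each of the three summands on the right-hand side of (\ref{sec2_var_Christof_eqn1d_A}) using the definition $T^{\flat}(u,v)=g(Tu,v)$ applied to the (1,1)-tensor $\nabla_{X}\mathcal{D}$. Explicitly, for any vector field $X\in\mathcal{X}(M)$,
\begin{align*}
g\bigl((\nabla_{X}\mathcal{D})Y,Z\bigr) &= (\nabla_{X}\mathcal{D})^{\flat}(Y,Z).
\end{align*}
Applying (\ref{commut_lower_cov_der}) then yields $(\nabla_{X}\mathcal{D})^{\flat}(Y,Z)=(\nabla_{X}\mathcal{D}^{\flat})(Y,Z)$. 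Performing this substitution on each of the three terms in (\ref{sec2_var_Christof_eqn1d_A}) produces exactly the right-hand side of (\ref{sec2_var_Christof_eqn1d_A_altern1a}). The left-hand side is unchanged, so the equivalence is established.

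The only subtle point — and it is a trivial one — is that $\mathcal{D}$ is intrinsically a symmetric linear endomorphism of $T_{x}M_{t}$, so that $\mathcal{D}^{\flat}$ is the genuine covariant symmetric $(0,2)$-tensor $2\mathcal{D}^{\flat}=\delta g$ from (\ref{var_metric_1}); this guarantees that formula (\ref{commut_lower_cov_der}) applies without ambiguity. Because of this, no main obstacle arises: the corollary is essentially a bookkeeping observation, and the proof amounts to citing (\ref{commut_lower_cov_der}) three times.
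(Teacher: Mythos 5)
Your proposal is correct and is exactly the paper's argument: the paper justifies this corollary with the single remark that the covariant derivative commutes with the index-lowering operation, i.e.\ by citing (\ref{commut_lower_cov_der}) term by term, just as you do. No further comment is needed.
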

Similarly to the concept of an infinitesimal isometry ($\delta g= 0$) we define the following concept:
\begin{definition}\label{defn_affine_motion}
We say that a motion $\phi_{t}(\tau)$ of a hypersurface $M$ is infinitesimally affine, if $\delta \nabla = 0$.
\end{definition}
We now give a necessary and sufficient condition in order that a motion is infinitesimally affine.  
\begin{proposition}\label{proposition_char_inf_aff_D_const}
A motion is infinitesimally affine if and only if $\nabla_{u}{\mathcal D}=0$, for any $u\in {\mathcal X}(M_{t})$.
\end{proposition}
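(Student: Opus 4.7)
The plan is to use formula (\ref{sec2_var_Christof_eqn1d_A_altern1a}) as the sole engine, combined with the symmetry of $\mathcal{D}^{\flat}$. Since $\mathcal{D}$ is $g$-symmetric (by (\ref{DandS}), $2\mathcal{D}=\nabla v^{||}+\nabla v^{||^{\star}}-2v_{n}S$ is a sum of symmetric operators), $\mathcal{D}^{\flat}$ is symmetric in its two arguments, and this symmetry passes to $\nabla_{u}\mathcal{D}^{\flat}$ in its last two slots, i.e.\ $(\nabla_{u}\mathcal{D}^{\flat})(w,z)=(\nabla_{u}\mathcal{D}^{\flat})(z,w)$. This will be the only structural fact we need beyond (\ref{sec2_var_Christof_eqn1d_A_altern1a}).

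For the easy direction ($\Leftarrow$), I would argue that if $\nabla\mathcal{D}=0$, then by (\ref{commut_lower_cov_der}) also $\nabla\mathcal{D}^{\flat}=0$, so the right-hand side of (\ref{sec2_var_Christof_eqn1d_A_altern1a}) vanishes identically in $u,w,z$; non-degeneracy of $g$ then forces $\delta\nabla=0$.

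For the non-trivial direction ($\Rightarrow$), assume $\delta\nabla=0$. Then (\ref{sec2_var_Christof_eqn1d_A_altern1a}) reads, for all $u,w,z$,
\begin{equation*}
(\nabla_{u}\mathcal{D}^{\flat})(w,z)+(\nabla_{w}\mathcal{D}^{\flat})(u,z)-(\nabla_{z}\mathcal{D}^{\flat})(u,w)=0. \tag{$\ast$}
\end{equation*}
Swap $u$ and $z$ in $(\ast)$ and apply the symmetry of $\mathcal{D}^{\flat}$ to rewrite the resulting identity as
\begin{equation*}
(\nabla_{z}\mathcal{D}^{\flat})(u,w)+(\nabla_{w}\mathcal{D}^{\flat})(u,z)-(\nabla_{u}\mathcal{D}^{\flat})(w,z)=0. \tag{$\ast\ast$}
\end{equation*}
Adding $(\ast)$ and $(\ast\ast)$ cancels the two $(\nabla_{u}\mathcal{D}^{\flat})(w,z)$-type terms against $(\nabla_{z}\mathcal{D}^{\flat})(u,w)$-type ones and leaves $2(\nabla_{w}\mathcal{D}^{\flat})(u,z)=0$. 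Since $u,w,z$ are arbitrary, $\nabla\mathcal{D}^{\flat}=0$, and then (\ref{commut_lower_cov_der}) gives $\nabla\mathcal{D}=0$.

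I do not expect a real obstacle: the content of the proposition is essentially a purely algebraic manipulation of a Koszul-type formula, once one notices the built-in symmetry of $\mathcal{D}^{\flat}$. The only point that deserves care is justifying that symmetry of $\mathcal{D}^{\flat}$ in its two arguments transfers to symmetry of $\nabla_{u}\mathcal{D}^{\flat}$ in its last two arguments, which follows directly from the Leibniz rule for covariant differentiation of $(0,2)$-tensors. Hence the proposition follows by combining these two directions.
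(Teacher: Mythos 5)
Your proposal is correct and takes essentially the same approach as the paper: both directions rest on the Koszul-type identity (\ref{sec2_var_Christof_eqn1d_A})/(\ref{sec2_var_Christof_eqn1d_A_altern1a}) together with the symmetry of $\nabla_{u}\mathcal{D}^{\flat}$ in its last two arguments, and your swap-$u$-and-$z$-then-add step is just a slightly tidier packaging of the paper's cyclic-permutation combination. No gaps.
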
  
\begin{proof}
Let $\nabla_{u} {\mathcal D} =0$ for any $u \in {\mathcal X}(M_{t})$, then the righthand side of (\ref{sec2_var_Christof_eqn1d_A}) is zero therefore $\delta \nabla =0$. \\
Let us now assume that $\delta \nabla =0$, then the right hand side of (\ref{sec2_var_Christof_eqn1d_A}) is zero for any $u, w, z \in {\mathcal X}(M_{t})$, hence:
\begin{align*}
g( (\nabla_{u}{\mathcal D})w, z) +  g( (\nabla_{w}{\mathcal D})u, z) -  g( (\nabla_{z}{\mathcal D})u, w) &= 0 \, \, {\bf (i)}
\end{align*}
Permutating cyclically $u, w, z \in {\mathcal X}(M_{t})$ we get the two relations:
\begin{align*}
g( ( \delta \nabla )(w,z), u ) &= g( (\nabla_{w}{\mathcal D})z, u) +  g( (\nabla_{z}{\mathcal D})w, u) -  g( (\nabla_{u}{\mathcal D})w, z) \, \, {\bf (ii)} \\
g( ( \delta \nabla )(z,u), w ) &= g( (\nabla_{z}{\mathcal D})u, w) +  g( (\nabla_{u}{\mathcal D})z, w) -  g( (\nabla_{w}{\mathcal D})u, z) \,\, {\bf (iii)}
\end{align*}
Using the $g$ - symmetry of $\nabla_{u} {\mathcal D}$, $\forall u \in {\mathcal X}(M_{t})$ and subtracting $({\bf iii})$ from the sum of ${\bf (i)}$ and ${\bf (iii)}$, we get:
\begin{align*}
2g ( (\nabla_{u}{\mathcal D})w, z ) - \left\{g((\nabla_{w}{\mathcal D})z,u) + g((\nabla_{z}{\mathcal D})w,u) - g((\nabla_{u}{\mathcal D})z,w) \right\} = 0.   
\end{align*}
But from $({\bf ii})$ the expression in the bracket is zero, therefore
\begin{align*}
g ( (\nabla_{u}{\mathcal D})w, z ) &= 0, 
\end{align*}  
for any $u, w, z \in {\mathcal X}(M_{t})$.
\end{proof}
Hypersurfaces admitting a parallel second order tensor field have been studied  before (\cite{Eisenhart}, \cite{Vilms}). 
Further, from formula (\ref{var_metric_1}) and the fact that  the covariant derivative commutes with the index lowering operation we get:
\begin{corollary}
A motion is infinitesimally affine if and only if the stretching ${\mathcal D}^{\flat}$, equivalently the variation of the metric $\delta g$, is parallel. 
\end{corollary}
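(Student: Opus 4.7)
The plan is to chain together three facts already established in the excerpt: Proposition \ref{proposition_char_inf_aff_D_const}, the commutation identity (\ref{commut_lower_cov_der}) between covariant differentiation and index lowering, and the variation formula (\ref{var_metric_1}) which identifies $\delta g$ with $2\mathcal{D}^{\flat}$. Since each of these is a clean equivalence or identity, the corollary should follow by a one-line substitution in each direction.

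Concretely, I would start from Proposition \ref{proposition_char_inf_aff_D_const}, which states that $\delta \nabla = 0$ holds if and only if $\nabla_{u}\mathcal{D} = 0$ for every $u \in \mathcal{X}(M_{t})$. Then I would apply the flat $\flat$ operation to both sides of the equation $\nabla_{u}\mathcal{D} = 0$ and invoke (\ref{commut_lower_cov_der}) to move $\flat$ past the covariant derivative, obtaining $(\nabla_{u}\mathcal{D})^{\flat} = \nabla_{u}\mathcal{D}^{\flat}$. This turns the condition into $\nabla_{u}\mathcal{D}^{\flat} = 0$ for every $u$, i.e.\ the parallel condition on $\mathcal{D}^{\flat}$. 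Since the flat operation on $(1,1)$ tensors is a pointwise linear isomorphism onto $(0,2)$ tensors via the nondegenerate metric $g$, the implication is actually a biconditional, so the vanishing of $\nabla_{u}\mathcal{D}$ is equivalent to the vanishing of $\nabla_{u}\mathcal{D}^{\flat}$.

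Finally, I would use (\ref{var_metric_1}), namely $\delta g = 2\mathcal{D}^{\flat}$, to rewrite the condition $\nabla_{u}\mathcal{D}^{\flat} = 0$ as $\nabla_{u}(\delta g) = 0$, since the factor of $2$ is a constant that pulls through the covariant derivative. Combining the two equivalences yields the desired triple characterization: $\delta \nabla = 0$ if and only if $\mathcal{D}^{\flat}$ is parallel, if and only if $\delta g$ is parallel.

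There is no real obstacle here; the statement is essentially a cosmetic repackaging of Proposition \ref{proposition_char_inf_aff_D_const} using two already-proven identities. The only thing one must be careful about is ensuring that the equivalence between $\nabla \mathcal{D} = 0$ and $\nabla \mathcal{D}^{\flat} = 0$ is justified as an actual biconditional (rather than a single implication), but this follows because $\flat$ is a bijection at each point determined by the nondegenerate metric $g$, so writing a single sentence to that effect suffices.
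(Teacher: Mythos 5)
Your proposal is correct and follows exactly the paper's own route: the authors derive this corollary in one line from Proposition \ref{proposition_char_inf_aff_D_const}, the commutation of covariant differentiation with index lowering (\ref{commut_lower_cov_der}), and the identity $\delta g = 2\mathcal{D}^{\flat}$ from (\ref{var_metric_1}). Your added remark that the equivalence of $\nabla_{u}\mathcal{D}=0$ and $\nabla_{u}\mathcal{D}^{\flat}=0$ is a genuine biconditional because $\flat$ is a pointwise isomorphism is a sensible precaution the paper leaves implicit.
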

Next we examine some special motions.

If the motion is tangential ($v = J v^{||}$) then $\delta \nabla =  {\pounds}_{v^{||}}\nabla$, hence we have the condition for the affine Killing fields (\cite{Yano}, p. 24):
\begin{corollary}
A tangential motion of $M$ is infinitesimally affine if and only if its velocity field is an affine Killing field.
\end{corollary}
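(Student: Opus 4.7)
The plan is to obtain the corollary as an immediate specialization of the general variation formula (\ref{sec2_var_generic_eqn2b}) to the tangential case, combined with the standard characterization of an affine Killing field as a zero of the Lie derivative of the connection.

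First I would set $v_{n}=0$ in the velocity decomposition $v = Jv^{||}+v_{n}n$, which is exactly the tangentiality hypothesis. Since $v_{n}$ is then the zero function on $M_{t}$, its gradient $\nabla v_{n}$ and all directional derivatives $u(v_{n})$, $w(v_{n})$ vanish as well. Inserting these vanishings into (\ref{sec2_var_generic_eqn2b}), every term on the right-hand side except the last one drops out, leaving
\begin{align*}
(\delta\nabla)(u,w) = (\pounds_{v^{||}}\nabla)_{u}w
\end{align*}
for all $u,w\in\mathcal{X}(M)$.

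Next I would invoke Definition~\ref{defn_affine_motion}, according to which $\phi_{t}(\tau)$ is infinitesimally affine precisely when $\delta\nabla=0$. By the equation just displayed, this condition is equivalent to $\pounds_{v^{||}}\nabla=0$, which is the defining condition (see \cite{Yano}, p.~24) for the tangential vector field $v^{||}\in\mathcal{X}(M)$ to be an affine Killing field. Combining the two equivalences yields the claimed characterization.

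There is essentially no obstacle here beyond bookkeeping: the result is a direct reading of the general formula once one notes that the purely normal contributions, the curvature term, and the shape-operator contributions all carry a factor of $v_{n}$ or of $\nabla v_{n}$, and therefore disappear in the tangential regime. The only minor point worth mentioning explicitly in the proof is the identification of $\pounds_{v^{||}}\nabla$, via the formula (\ref{Lie_der_connection}) already recorded in the previous proposition, with the algebraic expression $[v^{||},\nabla_{u}w]-\nabla_{[v^{||},u]}w-\nabla_{u}[v^{||},w]$ that remains after the cancellations.
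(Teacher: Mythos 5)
Your proposal is correct and follows exactly the route the paper takes: specialize formula (\ref{sec2_var_generic_eqn2b}) to $v_{n}=0$ so that only the Lie-derivative term $(\pounds_{v^{||}}\nabla)_{u}w$ survives, and then identify $\pounds_{v^{||}}\nabla=0$ with the affine Killing condition from \cite{Yano}. Nothing further is needed.
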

For a normal motion ($v=v_{n}n$) formula (\ref{DandS}) implies that ${\mathcal D} = -v_{n}S$, i.e.  ${\mathcal D}^{\flat} = -v_{n}B$. Therefore the formulas (\ref{sec2_var_Christof_eqn1d_A}) and (\ref{sec2_var_generic_eqn2b}) reduce to the following:
\begin{corollary}
For a normal motion the variation of the connection is given by:
\begin{align} 
\label{sec2_var_Christof_eqn1d_A_normal}
 g( (\delta \nabla)({u},w), z ) &= -\left(  \nabla _{u}{v_{n}B}\right) (w,z)- \left(  \nabla _{w}{v_{n}B}\right) (u,z) + \left(  \nabla_{z}{v_{n}B}\right) (u,w) \\ 
g( (\delta \nabla)({u},w), z ) &= -v_{n} \left\{ (\nabla_{u}B)(w,z)+ (\nabla_{w}B)(u,z) - (\nabla_{z}B)(u,w) \right\} \nonumber \\
&+ \left\{ u(v_{n})B(w,z) + w(v_{n})B(u,z) - z(v_{n})B(u,w)\right\}. \label{sec2_var_Christof_eqn1d_A_normal_BB}
\end{align}
\end{corollary}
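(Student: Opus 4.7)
The plan is to derive both identities by specializing the equivalent general form (\ref{sec2_var_Christof_eqn1d_A_altern1a}) of the variation formula -- in which $g((\delta\nabla)(u,w),z)$ is already written entirely through $\nabla\mathcal{D}^{\flat}$ -- to the case of a purely normal velocity $v=v_{n}n$.

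First I would pin down the form of $\mathcal{D}^{\flat}$ under the normal assumption. Setting $v^{\|}=0$ in (\ref{DandS}) immediately gives $\mathcal{D}=-v_{n}S$. Since $B=S^{\flat}$ and the lowering operation commutes with scalar multiplication by a smooth function (and with covariant differentiation via (\ref{commut_lower_cov_der})), one obtains $\mathcal{D}^{\flat}=-v_{n}B$. This single identification is the only input specific to the normal case.

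Next I would substitute $\mathcal{D}^{\flat}=-v_{n}B$ directly into the corollary (\ref{sec2_var_Christof_eqn1d_A_altern1a}). Each occurrence of $\nabla\mathcal{D}^{\flat}$ becomes $-\nabla(v_{n}B)$, so the first claimed identity (\ref{sec2_var_Christof_eqn1d_A_normal}) drops out essentially as a one-line replacement. To pass to the second, more geometric form (\ref{sec2_var_Christof_eqn1d_A_normal_BB}), I would apply the Leibniz rule
\[
\bigl(\nabla_{u}(v_{n}B)\bigr)(w,z)=u(v_{n})\,B(w,z)+v_{n}(\nabla_{u}B)(w,z)
\]
to each of the three terms, and then group together the pieces that carry a directional derivative of $v_{n}$ separately from the pieces that carry $v_{n}$ itself. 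The $v_{n}$-pieces assemble into the symmetrized combination of covariant derivatives of $B$, and the $v_{n}$-derivative pieces assemble into the analogous symmetrization of $B$ itself.

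The computations are routine; the only care required is sign and symmetry bookkeeping in the Leibniz expansion. As a cross-check one could redo the derivation starting from the proposition (\ref{sec2_var_generic_eqn2b}): with $v^{\|}=0$ the Lie-derivative term $({\pounds}_{v^{\|}}\nabla)_{u}w$ vanishes automatically, but then one still has to trade the ambient-curvature term $v_{n}\mathcal{P}\overline{R}(n,Ju)\overline{w}$ for the symmetric combination $(\nabla_{u}B)(w,z)+(\nabla_{w}B)(u,z)-(\nabla_{z}B)(u,w)$ via the Codazzi equation (\ref{codazzi S}). This alternative route is distinctly more laborious, which is why the direct substitution into (\ref{sec2_var_Christof_eqn1d_A_altern1a}) is the preferable approach.
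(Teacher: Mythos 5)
Your approach is exactly the paper's: the sentence preceding the corollary just observes that $v^{||}=0$ in (\ref{DandS}) gives ${\mathcal D}=-v_{n}S$, hence ${\mathcal D}^{\flat}=-v_{n}B$, and substitutes this into (\ref{sec2_var_Christof_eqn1d_A})/(\ref{sec2_var_Christof_eqn1d_A_altern1a}) (and into (\ref{sec2_var_generic_eqn2b}) for the second form). One caution, though: the ``routine sign bookkeeping'' you defer is precisely where the statement as printed goes wrong. Expanding $-(\nabla_{u}(v_{n}B))(w,z)-(\nabla_{w}(v_{n}B))(u,z)+(\nabla_{z}(v_{n}B))(u,w)$ by the Leibniz rule gives $-v_{n}\{(\nabla_{u}B)(w,z)+(\nabla_{w}B)(u,z)-(\nabla_{z}B)(u,w)\}-\{u(v_{n})B(w,z)+w(v_{n})B(u,z)-z(v_{n})B(u,w)\}$: the second brace carries a \emph{minus} sign, whereas (\ref{sec2_var_Christof_eqn1d_A_normal_BB}) prints a plus. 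The cross-check you propose via (\ref{sec2_var_generic_eqn2b}) confirms the minus, since its gradient terms $-\{w(v_{n})Su+u(v_{n})Sw\}+B(u,w)\nabla v_{n}$ lower to exactly that combination. So your method is sound and matches the paper's, but executed carefully it establishes a sign-corrected version of the second identity rather than the one displayed (the discrepancy propagates to (\ref{sec2_var_Christof_eqn1d_A_normal_B_Eucl_amb}) but is harmless in the final remark, where $\nabla v_{n}=0$ kills those terms); you should state this explicitly instead of asserting that the printed form ``drops out.''
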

An immediate consequence of (\ref{sec2_var_Christof_eqn1d_A_normal}) is the following known relation between the velocity and the extrinsic geometry of the hypersurface  (theorem 4 in \cite{Yano&Chen} and corollary 2 in \cite{Yano2}).
\begin{corollary}
A necessary and sufficient condition for a normal motion to be infinitesimally affine is $ \nabla v_{n}B =0$.	
\end{corollary}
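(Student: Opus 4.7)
The plan is to read the corollary as a direct consequence of the previous corollary (equation \eqref{sec2_var_Christof_eqn1d_A_normal}), which for a normal motion expresses $g(\delta\nabla(u,w),z)$ entirely in terms of covariant derivatives of the symmetric $(0,2)$ tensor $T := v_{n}B$. Specifically, the identity to exploit is
\begin{equation*}
g((\delta\nabla)(u,w),z) = -(\nabla_{u}T)(w,z) - (\nabla_{w}T)(u,z) + (\nabla_{z}T)(u,w).
\end{equation*}

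The sufficiency direction is immediate: if $\nabla T = 0$, each of the three terms on the right vanishes identically, so $\delta\nabla = 0$. I would write this in one line and move on.

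For the necessity direction, I would mirror the cyclic-permutation argument already used in the proof of Proposition \ref{proposition_char_inf_aff_D_const}. The tensor $T$ is symmetric, and covariant differentiation commutes with index lowering (see \eqref{commut_lower_cov_der}), so $\nabla_{X}T$ is symmetric in its two tensor slots for every $X$. Starting from
\begin{equation*}
(\nabla_{u}T)(w,z) + (\nabla_{w}T)(u,z) - (\nabla_{z}T)(u,w) = 0 \qquad (\mathrm{i})
\end{equation*}
I would cyclically permute $(u,w,z)\mapsto(w,z,u)$ and $(u,w,z)\mapsto(z,u,w)$ and, in each case, invoke the symmetry of $T$ to rewrite every evaluation in the canonical order of the arguments $u,w,z$. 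This yields two further identities whose sum with (i) collapses, after cancellation, to $2(\nabla_{w}T)(u,z)=0$. Since $u,w,z\in\mathcal{X}(M_{t})$ were arbitrary, we obtain $\nabla T = \nabla(v_{n}B) = 0$.

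I do not anticipate a real obstacle: the whole corollary is an algebraic manipulation at a single point, and the only care needed is to use the symmetry of $B$ consistently when rewriting the permuted identities so that the three terms line up for cancellation — the same trick already employed in Proposition \ref{proposition_char_inf_aff_D_const} with $\mathcal{D}$ in place of $v_{n}B$.
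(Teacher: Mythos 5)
Your proposal is correct and follows essentially the paper's route: the paper presents this corollary as an immediate consequence of formula (\ref{sec2_var_Christof_eqn1d_A_normal}), with the necessity direction resting on exactly the cyclic-permutation-plus-symmetry argument you describe, already carried out in Proposition \ref{proposition_char_inf_aff_D_const}. The only economy you miss is that one can skip re-running the permutation argument entirely by noting that for a normal motion $\mathcal{D}^{\flat}=-v_{n}B$ and invoking Proposition \ref{proposition_char_inf_aff_D_const} (or its corollary on parallel $\mathcal{D}^{\flat}$) directly.
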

If the ambient space is Euclidean ($N = \mathbb R^{m+1}$) then, using the equation of Codazzi, equation (\ref{sec2_var_Christof_eqn1d_A_normal_BB}) reduces to 
\begin{align}\label{sec2_var_Christof_eqn1d_A_normal_B_Eucl_amb}
g( (\delta \nabla) ({u},w), z ) &= -v_{n} (\nabla_{u}B)(w,z) \nonumber \\
&+ \left\{ u(v_{n})B(w,z) + w(v_{n})B(u,z) - z(v_{n})B(u,w)\right\}.
\end{align}
\begin{remark}
Parallel hypersurfaces in Euclidean space are produced by a special case of a normal motion in which $v_{n}$ does not depend on the position ($\nabla v_{n}=0$). In this case the hypersurface is moving along its normal the same distance at any point and parallel to itself. Then formula (\ref{sec2_var_Christof_eqn1d_A_normal_B_Eucl_amb}) reduces further to:
\begin{align}\label{extend_vf_by_motion_eqn2f}
( \delta \nabla )({u},w) &=  -v_{n}(\nabla_{u}S)w.
\end{align}
\end{remark}
An immediate consequence of (\ref{extend_vf_by_motion_eqn2f}) is the following: 
\begin{corollary}
In the parallel motion of a hypersurface in a Euclidean space, the affine connection is infinitesimally preserved if and only if the surface has parallel shape operator.
\end{corollary}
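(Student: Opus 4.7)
The plan is to read the corollary directly off of formula (\ref{extend_vf_by_motion_eqn2f}), which was established in the preceding remark for parallel motion in Euclidean ambient space. That formula states
\[
(\delta \nabla)(u,w) = -v_n (\nabla_u S)w
\]
for every $u,w \in \mathcal{X}(M)$, so the entire content of the corollary lies in extracting an ``iff'' from this identity.

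First I would prove the easy direction: if the shape operator is parallel, $\nabla S = 0$, then $(\nabla_u S)w = 0$ pointwise for every choice of $u,w$, so the right-hand side of the displayed formula vanishes identically and $\delta \nabla = 0$, i.e.\ the connection is infinitesimally preserved. This requires no computation beyond substitution.

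For the converse I would argue as follows. Assume $\delta \nabla = 0$ for the parallel motion. Then the identity above yields $v_n (\nabla_u S)w = 0$ for all $u,w \in \mathcal{X}(M)$. The parallel motion hypothesis, recorded in the remark preceding (\ref{extend_vf_by_motion_eqn2f}), is that $v = v_n n$ with $\nabla v_n = 0$, so $v_n$ is constant on $M$; assuming the motion is nontrivial, $v_n \neq 0$ and we may divide through to conclude $(\nabla_u S)w = 0$ for all $u,w$, which is precisely $\nabla S = 0$.

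The only real point requiring care, and the nearest thing to an obstacle, is making the nontriviality hypothesis on $v_n$ explicit: strictly speaking, if $v_n \equiv 0$ the hypersurface is not moving at all and both sides of the equivalence hold vacuously, so one can either discard that degenerate case at the outset or absorb it into the statement. Beyond this remark, the proof is a one-line consequence of (\ref{extend_vf_by_motion_eqn2f}) and needs no further machinery.
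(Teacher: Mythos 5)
Your proposal is correct and follows exactly the route the paper intends: the paper presents this corollary as an immediate consequence of formula (\ref{extend_vf_by_motion_eqn2f}), and your argument simply spells out the two directions of that equivalence, dividing by the constant $v_{n}\neq 0$ for the converse. Your explicit remark about the degenerate case $v_{n}\equiv 0$ is a reasonable bit of added care, but the substance is identical to the paper's reasoning.
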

\begin{remark}
The above corollary imposes a very strong restriction for the hypersurface in order to admit a motion parallel to itself and infinitesimally affine \cite{Vilms}.
\end{remark}
\section{Some examples}
In this section we show how the calculations of this work apply to specific cases. 
\begin{example}{Parallel hypersurfaces in Euclidean space:} 
In the case of a hypersurface in an Euclidean space moving parallel to itself, we can have explicit forms of its kinematical and geometrical quantities. This allows us to directly calculate the variation of the connection either using its definition (\ref{sec2_var_con_eqn1b}) or the compatibility condition (\ref{sec2_var_Christof_eqn1b}). \\
Let $M^{}$ be an oriented hypersurface in $\mathbf R^{m+1}$ with unit normal field $n$. Suppose that $M$ undergoes a {\it parallel motion} given by the mapping  $\phi_{t}(\tau):M_{t} \rightarrow \mathbf R^{m+1}$ such that:
\begin{align}\label{parallel_motion_defn}
\phi_{t}(\tau)(p) &= j_{t}(p) + \epsilon_{t}(\tau)n(p), \, \,   \tau \geq{t}
\end{align} 
where $\epsilon_{t}(\tau)$ is a differentiable real valued function of $t$ and $\tau$ with $\epsilon_{t}(t) =0$. 
The velocity of the motion is $v = v_{n}n$ where $v_{n} = \frac{\partial \epsilon_{t}(\tau)}{\partial \tau}|_{\tau =t}$. The deformation gradient $F_{t}(\tau)(p)$ (differential of $\phi_{t}(\tau)(p)$), is given by:
\begin{equation}\label{diff_parall_motion}
F_{t}(\tau)=J ( I - \epsilon_{t}(\tau)S )
\end{equation}
All the kinematical quantities are derived from expression (\ref{diff_parall_motion}) (see: \cite{NKadianakis}). Using (\ref{D(t)}), (\ref{extend_vf_by_motion_eqn2b}), (\ref{G=JD+WJ}), (\ref{DandS}) and (\ref{gtt}), we get the metric tensor, the velocity gradient, and the stretching of the motion:
 \begin{align}\label{t_dep_metric_par_motion}
g_{t}(\tau)&= g(t) - 2\epsilon_{t}(\tau)B + \epsilon_{t}(\tau)^{2}III \\
G(t) &= -v_{n}(t)JS(t) \label{t_dep_G_par_motion}\\
{\mathcal D}(t) &= -v_{n}(t)S(t), \label{stretching_parallel} 
\end{align}            
where $III$ is the third fundamental form of the hypersurface.
The motion induces a $\tau$ - dependent connection $\nabla_{t}(\tau)$ on $M_{t}$ given by (\ref{sec2_var_con_eqn1b}) and satisfying the compatibility condition (\ref{sec2_var_Christof_eqn1b}). Using this condition, the above expression for the $\tau$ -depending metric, the Codazzi equation and the $g$ - symmetry of $\nabla_{X} S$, we get for any vector fields $X,Y,Z \in {\mathcal X}(M)$:
\begin{align}\label{paral_mot_conn_var_eqn1a}
&2g(\nabla_{t}(\tau)_{Y}X,Z) -4 \epsilon_{t}(\tau)g(S(t) \nabla_{t}(\tau)_{Y}X, Z) + \epsilon^{2}_{t}(\tau)g(S^{2}(t)\nabla_{t}(\tau)_{Y}X,Z)  \nonumber \\
&= 2 g ( \nabla_{Y}X,Z ) - 4 \epsilon_{t}(\tau) g \left( (\nabla_{Y}S)X,Z \right) - 4 \epsilon_{t}(\tau)g(S(t)\nabla_{Y}X, Z) \nonumber \\
&+ \epsilon^{2}_{t}(\tau){\mathcal Q}_{}(X,Y,Z)
\end{align}
where the quantity ${\mathcal Q}_{}(X,Y,Z)$  is given by:
\begin{align*}
{\mathcal Q}_{}(X,Y,Z) &= g( (\nabla_{X}S^{2})Y,Z ) + g((\nabla_{Y}S^{2})X,Z) - g( (\nabla_{Z}S^{2})X,Y ) \nonumber \\
&+ g(2 S^{2}\nabla_{Y}X, Z ). 
\end{align*}
Taking the derivative on both sides of (\ref{paral_mot_conn_var_eqn1a}) with respect to $\tau$ at $\tau=t$ we get:
\begin{align}\label{paral_mot_conn_var_eqn1b}
2 g ( ( \delta \nabla ) (Y,X),Z ) &= - 2v_{n} g \left( (\nabla_{Y}S)X,Z \right).
\end{align}
Since the relation (\ref{paral_mot_conn_var_eqn1b}) is valid for any choice of $X,Y,Z \in {\mathcal X}(M)$ we get again formula (\ref{extend_vf_by_motion_eqn2f}). 
\end{example}
\begin{example}{{Balloon expansion:}}
We consider a spherical balloon expanding by blowing air in at the point O. We assume that this is modeled by a homothetic motion of the sphere $S_{1} : x^{2} + y^{2}+ (z-1)^{2} =1$ with radius $1$ and center at $(0,0,1)$. We study its deformation under the affine map of the ambient space $(x,y,z) \rightarrow t (x,y,z)$.
\begin{figure}[htbp]
\centering
\includegraphics[width=0.50 \textwidth,bb=0 0 273 278,clip=true]{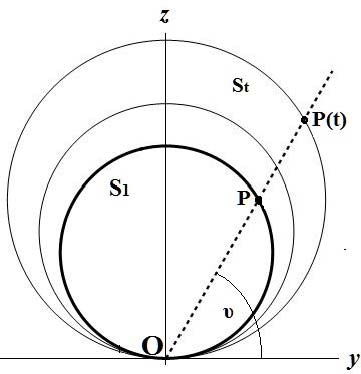}
\caption{Balloon expansion}
\label{Balloon expansion}
\end{figure}
This creates the homothetic family of spheres $S_{t}: x^{2} + y^{2} + (z- t)^{2} = t^{2}$ (see Figure 1 with one dimension suppressed).
Each of these spheres $S_{t}$ is a result from a rotation around the $z$ - axis of the circle 
$ C_{t}: x = 0, \, \, y^{2}+(z-t)^{2} =t^{2}$.  
Using the polar angle $v$ as a parameter we have the parametrization for this circle 
$ c(v) = (0,t {\rm sin }2v , 2t {\rm sin}^{2}v) $.
Thus we have the parametrization, of the sphere $S_{t}$ at time $t$, or equivalently the motion: 
\begin{align}\label{Example1_eqn1b}
\phi: S_{1} \rightarrow \mathbf R^{3}, \, \, \phi (u,v, t) = t \left({\rm cos }u \, {\rm sin}2v, {\rm sin }u \, {\rm sin}2v, 2{\rm sin}^{2}v \right), \, \, t \geq 1.
\end{align}
This motion maps the point $P$ at $t=1$ to the point $P(t)$ (Figure 1).\\
Clearly $j(u,v)=\phi(u,v,1) = \left({\rm cos }u \, {\rm sin}2v, {\rm sin }u \, {\rm sin}2v, 2{\rm sin}^{2}v \right)$ is the canonical injection of the original sphere and $J=dj$ its differential. We consider the base on $T_{p}S_{1}$ consisting of the partial derivatives of $j$ i.e. $\{ \epsilon_{1} = j_{u}, \, \epsilon_{2} = j_{v}\}$  and the usual orthonormal basis $\{e_{1}, e_{2}, e_{3}\}$ in $\mathbf R^{3}$. Then, relative to these bases we have that the matrices of the linear maps $J(u,v) =dj(u,v)$ and $F(u,v,t)= d \phi (u,v,t)$ are:      
\begin{align}\label{Example1_eqn1d}
[J(u,v)] &= 
\left[
\begin{array}{cc}
-{\rm sin}u \, {\rm sin} 2v & 2 {\rm cos} u \, {\rm cos} 2v \\
{\rm cos}u \, {\rm sin} 2v & 2 {\rm sin} u \, {\rm cos} 2v \\
0 & 2 {\rm sin} 2v
\end{array}
\right], \nonumber \\
 \\ [F(u,v)] &= t\left[
\begin{array}{cc}
-{\rm sin}u \, {\rm sin} 2v & 2 {\rm cos} u \, {\rm cos} 2v \\
{\rm cos}u \, {\rm sin} 2v & 2 {\rm sin} u \, {\rm cos} 2v \\
0 & 2 {\rm sin} 2v
\end{array}
\right] = t [J(u,v)]. \nonumber  
\end{align}
The metric $g$ on the original sphere and the  $t$ - dependent metric on $S_{1}$ have matrices given by: 
\begin{align}\label{Example1_eqn1f_0}
[ g(u,v) ] &=  
\left[ 
\begin{array}{cc}
{\rm sin}^{2}2v & 0 \\
0 & 4
\end{array}
\right], \, \, \, [ g_{t}(u,v) ] = t^{2}[g (u,v)] 
\end{align}
respectively. \\
Using relation (\ref{var_metric_1}) one directly finds the stretching 
\begin{align}\label{Example1_eqn3a}
[{\mathcal D}^{\flat}(u,v)] &= [\frac{1}{2}\delta g] = [\frac{1}{2} \frac{\partial g(t)}{\partial t}|_{t=1}] = [g(u,v)].
\end{align}
i.e. ${\mathcal D}^{\flat}$ coincides with the metric and thus it is parallel. Since $(\nabla_{u}{\mathcal D}^{\flat} ) = (\nabla_{u}{\mathcal D}^{} )^{\flat}$ it follows that ${\mathcal D}$ is parallel. \\
Further, the non zero Christofell symbols of the connection, relative to the bases we considered above, are: 
\begin{align*}
\Gamma_{12}^{1} &= \Gamma_{21}^{1} = 2 {\rm cot} 2v, \, \, \Gamma_{11}^{2} = - \frac{1}{4}{\rm sin} 4v, 
\end{align*} 
Since they do not depend on $t$, we have $\delta \nabla =0$. \\
The stretching ${\mathcal D}$ may also be calculated from the deformation $F(u,v,t)$ using formulas (\ref{t=tau1}) - (\ref{D(t)}). Indeed, the adjoint of $F(u,v,t)$ can be found using the metrics $g(u,v)$ on $M$ and $\bar{g}$ of $\mathbf R^{3}$. It has a matrix which is the product of the matrix of the inverse of the metric $g(u,v)$ with the transpose of the matrix of $F(u,v,t)$. Using this we find:
\begin{align*}
[F^{*}_{t}(u,v,t)] &= [g^{-1}][F(u,v,t)]^{T} \\
&=  \left[ 
\begin{array}{cc}
\frac{1}{{\rm sin}^{2}2v} & 0 \\
0 & \frac{1}{4}
\end{array}
\right] t \left[
\begin{array}{cc}
-{\rm sin}u \, {\rm sin} 2v & 2 {\rm cos} u \, {\rm cos} 2v \\
{\rm cos}u \, {\rm sin} 2v & 2 {\rm sin} u \, {\rm cos} 2v \\
0 & 2 {\rm sin} 2v
\end{array}
\right]^{T} \\
&= t \left[ 
\begin{array}{ccc}
\frac{-sin u}{sin 2 v} & \frac{cos u}{sin 2v} & 0 \\ 
& &  \\
\frac{cos u \, cos 2v}{2} & \frac{sinu \, cos 2v}{2} & \frac{sin \, 2v}{2}
\end{array}
\right]
\end{align*}
and thus the Cauchy - Green tensor field has the matrix 
\begin{align}\label{Example1_eqn2f}
[{ C}_{t}(u,v)] &= [F^{*}_{t}(u,v)F_{t}(u,v) ] = t^{2}  
\left[ 
\begin{array}{cc}
1 & 0 \\
0 & 1
\end{array}
\right]. 
\end{align}
Then differentiating with respect to $t$ at $t=1$ we get:
\begin{align}\label{D_hom_spheres_altern_eq1}
[{\mathcal D}] &= \frac{1}{2}\left[ \frac{dC}{dt}|_{t=1}\right] = \left[ 
\begin{array}{cc}
1 & 0 \\
0 & 1
\end{array}
\right].
\end{align}
We observe that although $[{\mathcal D}]$ has constant components and $[{\mathcal D}^{\flat}]$ has not, both are covariantly constant since $[{\mathcal D}^{\flat}] = [g]{[\mathcal D}]$. Therefore the motion is infinitesimally affine but not infinitesimally isometric.
\end{example}
\begin{example}{An infinitesimally affine unrolling of a cylindrical shell}. 
Let $K_{t}$ be the circular cylinder defined by its axis (the x-axis) and the circle $ C_{t}: x = 0, \, \, y^{2}+(z-t)^{2} =t^{2}$. By an {\it unrolling} of the cylinder $K_{1}$ we mean (see figure 2, with x-axis suppressed) that a point $P$ on any circular section of the cylinder $K_{1}$ with a plane parallel to the $yz$ plane, is mapped to the point $P^{}(t)$ on the circular section by the same plane, of the cylinder $K_{t}$, such that the arcs $OP$ and $OP(t)^{}$ have the same length. Using the polar angle $u$ as a parameter, we have, as in the previous example, the parametrization of  $ C_{t}$: $c(u)=(0,t{\rm sin} \,2{u} , \, t{\rm sin}^{2}u)$. 
The circular cylinder $K_{t}$ has the parametrization $k(u,v,t)=(v,t{\rm sin} \,2{u} , \, t{\rm sin}^{2}u)$.\\
The polar angles $u$ of $P$ and $u(t)$ of $P^{}(t)$ are then related by $tu(t) = u$. Thus we have the following isometric unrolling of the cylinder: 
\begin{align}\label{unfolding_right_cylinder_eqn1b}
f(u,v,t) &= \left( v, t \, {\rm sin} \,\frac{2u}{t} , \, 2t{\rm sin}^{2}\, \frac{u}{t} \right),\,\,\, t \geq 1.
\end{align}
\begin{figure}[htbp]
\centering
\includegraphics[scale=0.5,bb=0 0 404 311,clip=true]{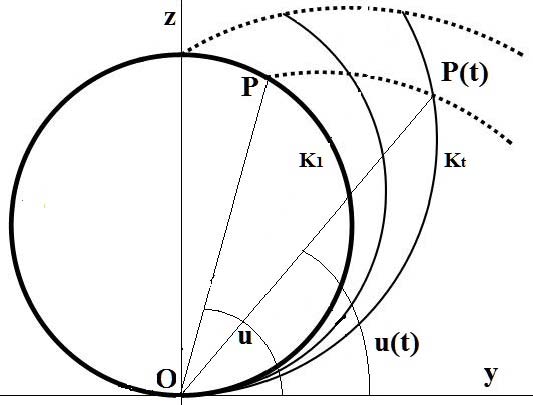}
\caption{Unrolling of a piece of a cylindrical shell}
\label{elongation_unfolding_cylinder}
\end{figure}
In order to obtain an infinitesimally affine unrolling we combine the previous unrolling with a time dependent  elongation along the axis of the cylinder $K_{t}$ so we finally have the motion:
\begin{align}\label{deform_right_cylinder_eqn1b}
\phi(u,v,t) &= \left( t v, t \, {\rm sin} \,\frac{2u}{t} , \, 2t{\rm sin}^{2}\, \frac{u}{t} \right),\,\,\, t \geq 1.
\end{align}
The canonical injection of the original cylinder is $j(u,v)=\phi(u,v,1)$. Considering the partial derivatives of $j$ as the base vectors on the tangent space of the cylinder and the usual basis on the ambient space we have the following matrices for the original metric on the surface, the time dependent metric and the $(0,2)$ tensor field ${\mathcal D}^{\flat}$: 
\begin{align*}
[g(u,v,1)] &= \left[ \begin{array}{cc} 4 & 0 \\ 0 & 1 \end{array} \right], \, \, [g(u,v,t)] = \left[ \begin{array}{cc} 4 & 0 \\ 0 & t^{2} \end{array} \right], \\
[D^{\flat}(u,v)] &= \frac{1}{2} (\delta g)(u,v) =\left[ \begin{array}{cc} 0 & 0 \\ 0 & 1 \end{array}  \right]. 
\end{align*}
The tensor field ${\mathcal D}(u,v)$ can be calculated, as in the previous example, by using the adjoint of $F(u,v,t)$ whose matrix is 
\begin{align*}
[F^{\star}(u,v,t)] &= 
\left[ \begin{array}{ccc} 
0 & \frac{1}{2}{\rm cos } \left(\frac{2u}{t}\right) & \frac{1}{2} {\rm sin} \left( \frac{2u}{t} \right) \\ t & 0 & 0
\end{array} \right]
\end{align*}
and the Cauchy - Green tensor. Both ${\mathcal D}(u,v)$ and ${\mathcal D}^{\flat}(u,v)$ are covariantly constant. On the other hand, since the time dependent metric does not depend on the position $(u,v)$ but only on the time $t$, all the Christofell symbols vanish. Therefore the motion (\ref{deform_right_cylinder_eqn1b}) is infinitesimally affine but not infinitesimally isometric.
\end{example}


\begin{thebibliography}{99}
\bibitem {BenAndrews}Andrews, B.: Contraction of convex hypersurfaces in Riemannian spaces. J. Differ. Geom. \textbf{39}, (1994) 407--431 .
\bibitem{DoCarmo2} Barbosa, J. L., do Carmo, M., Eschenburg, J.: Stability of Hypersurfaces of Constant Mean Curvature in Riemannian Manifolds. Math. Z. \textbf{197}, (198) 123--138.
\bibitem{Betounes1}Betounes D.  Differential Geometric aspects of Continuum Mechanics in Higher Dimensions, Contemporary mathematics vol 68 (1987), 23--37.
\bibitem {Guven1} Capovilla, R., Guven, J., Santiago, J.A.: Deformations of the geometry of lipid vesicles. J. Phys. A Math. Gen. \textbf{36},  (2003) 6281--6295.
\bibitem {Guven2} Capovilla, R., Guven, J.: Geometry of  deformations of relativistic membranes. Phys. Rev. D. \textbf{51}, no 12,  (1995) 6736--6743.
\bibitem {DoCarmo}do Carmo, M.: Riemannian Geometry, Birkhausser, Boston (1992).
\bibitem{Yano&Chen} Chen, B.Y., Yano, K,: On the theory of normal variations J. Diff. Geom. 13 (1978) 1-10.
\bibitem{Eisenhart} Eisenhart, L. P., Symmetric Tensors of the Second Order Whose Covariant Derivatives Vanish. Annals of Mathematics, Second Series, Vol. 27, No. 2 , pp. 91-98 (1925).
\bibitem{Epstein} Epstein, M. : {\it The Geometrical Language of Continuum Mechanics.} Cambridge University Press (2014).
\bibitem {Gurtin} Gurtin, M.E., Murdoch, A.I.: A continuum theory of elastic
material surfaces. Arch. Ration. Mech. An. \textbf{57}, (1975) 291--323.
\bibitem {Heijden} van der Heijden, G.H.M.: The static deformation of a twisted elastic rod constrained to lie on a cylinder. P. Roy. Soc. Lond. A, Mat, \textbf{457},  (2001) 695--715.
\bibitem {NKadianakis}  Kadianakis, N.: Evolution of surfaces and the kinematics of membranes. J. Elasticity, \textbf{99},  (2010) 1--17.
\bibitem {NKad_Ftravlo} Kadianakis, N., Travlopanos. F: Kinematics of hypersurfaces in Riemannian manifolds. J. Elasticity, \textbf{111},  (2013) 223--243.
\bibitem {COHEN} Man, C.-S., Cohen, H.: A coordinate-free approach to the kinematics of membranes. J. Elasticity, \textbf{16},  (1986) 97--104.
\bibitem {Murdoch1989} Murdoch, A.I.: A coordinate-free approach to surface kinematics. Glasgow Math. J. \textbf{32}, (1990) 299--307.
\bibitem{Marsden} Marsden, J. Hughes, T.: Mathematical Foundations of Elasticity, Dover (1994).
\bibitem {Noll} Noll, W.: A mathematical theory of the mechanical behavior of continuous media. Arch. Ration. Mech. An.  \textbf{2}, (1958)197--226.
\bibitem{Szwabo} Szwabowicz. M.L.: Pure strain deformations of surfaces. J. Elast. \textbf{92}(2008), 255--275.
\bibitem {Truesdell} Truesdell, C.: A First Course in Rational Continuum Mechanics. vol. 1. Academic Press, San Diego (1977).
\bibitem{Vilms} Vilms, J., Submanifolds of Euclidean space with parallel second fundamental form. Proceedings of The American Mathematical Society, Volume 32, (1972) Number 1.
\bibitem {Yano} Yano, K.: Integral Formulas in Riemannian Geometry. Dekker, New York (1970).
\bibitem {Yano2} Yano, K.: Infinitesimal Variations of Submanifolds, KODAI MATH J. \textbf{1}, (1978), 30--34.
\bibitem {Yavari1} Yavari, A. and Marsden J.: Covariantization of nonlinear elasticity, Z.Angew. Math. hys. \textbf{63} (2012), 921--927.
\bibitem {Yavari2} Yavari, A. and Goriely, A.: Riemann-- Cartan Geometry of Nonlinear Dislocation Mechanics, Arch. Rational Mech. Anal. \textbf{205} (2012) 59--118.





\end{thebibliography}
\end{document}